\renewcommand{\emptyset}{\varnothing}
\newcommand{\Size}[1]{\left| #1 \right|}
\newcommand{\Span}[1]{\left< #1 \right>}
\newcommand{\Fix}{\mathrm{Fix}}
\newcommand\inverse{{^{-1}}}
\newcommand\cl{\operatorname{A}}
\newcommand{\partitions}{\vdash}
\newcommand{\rpartitions}[1]{\vdash^{#1}}
\newcommand{\CC}{\mathcal{C}}
\newcommand{\RR}{\mathcal{R}}
\newcommand{\Sym}{\mathfrak{S}}
\newtheorem{Theorem}{Theorem}[section]
\newtheorem{Lemma}[Theorem]{Lemma}
\theoremstyle{definition}
\title[On Reflection Subgroups of Finite Coxeter Groups]
{On Reflection Subgroups of Finite Coxeter Groups}
\subjclass[2010]{20F55 (05E15)}
\keywords{Finite Coxeter groups, reflection subgroups, Coxeter elements}
\author[J.M. Douglass]{J. Matthew Douglass} 
\address{Department of Mathematics,
University of North Texas, 
Denton TX, USA 76203}
\email{douglass@unt.edu} 
\author[G. Pfeiffer]{G\"otz Pfeiffer}
\address{School of Mathematics, Statistics and Applied Mathematics,
NUI Galway, Ireland}
\email{goetz.pfeiffer@nuigalway.ie}
\author[G. R\"ohrle]{Gerhard R\"ohrle}
\address
{Fakult\"at f\"ur Mathematik,
Ruhr-Universit\"at Bochum,
D-44780 Bochum, Germany}
\email{gerhard.roehrle@rub.de}
\begin{document}

\begin{abstract}
  Let $W$ be a finite Coxeter group. We classify the reflection subgroups of
  $W$ up to conjugacy and give necessary and sufficient conditions for the
  map that assigns to a reflection subgroup $R$ of $W$ the conjugacy class
  of its Coxeter elements to be injective, up to conjugacy.
\end{abstract}

\maketitle

\section{Introduction}
\label{sec:intro}

Throughout, let $(W,S)$ be a finite Coxeter system with distinguished set of
generators $S$ and let $E$ be the real reflection representation of
$W$. Define $T=\{\, wsw\inverse\mid w\in W,\,s\in S\,\}$ to be the set of elements of
$W$ that act on $E$ as reflections. By a \emph{reflection subgroup of $W$}
we mean a subgroup of $W$ generated by a subset of $T$.
Reflection subgroups of $W$ play an important role in the theory of Coxeter
groups; for instance, by a fundamental theorem due to Steinberg,
\cite[Thm.~1.5]{Steinberg64}, the stabilizer of any subspace of $E$ is a
reflection subgroup of~$W$.

Our first aim in this note is to give a complete classification of all
reflection subgroups of $W$ up to conjugacy.  In case $W$ is a Weyl group,
Carter \cite[p.~8]{Carter1972} has already outlined a procedure which leads
to the classification based on the algorithm of Borel--De
Siebenthal~\cite{BorelDeSiebenthal1949}. Here, we recast slightly Carter's
construction and give the classification for non-crystallographic Coxeter
groups as well. 
Similar classifications have been described by Felikson and 
Tumarkin~\cite{TumarkinFelikson2005}, and by Dyer and 
Lehrer~\cite{DyerLehrer2009}.
Our methods differ from those used in the sources cited above in that
we use the notion of a parabolic closure of a reflection subgroup as
an inductive tool in our analysis.

Every reflection subgroup of $W$ is a maximal rank reflection subgroup of
some parabolic subgroup of $W$. Thus, classifying conjugacy classes of
reflection subgroups may be done recursively and reduces to first classifying
conjugacy classes of parabolic subgroups and then classifying maximal rank
subgroups of irreducible Coxeter groups. Conjugacy classes of parabolic
subgroups of an irreducible finite Coxeter group are described in Chapter~2
and Appendix~A of \cite{GeckPfeiffer2000}.  Classifying maximal rank
reflection subgroups of $W$ amounts to listing, up to the action of $W$, all
subsets $Y$ of $T$ whose fixed point set in $E$ is trivial and which are
closed in the sense that $\Span{Y} \cap T=Y$. In case $W$ is a Weyl
group, the algorithm of Borel--De Siebenthal~\cite{BorelDeSiebenthal1949} is
computationally much more efficient than classifying subsets of $T$
with the two required properties.

We have implemented the classification algorithms in the computer
algebra system {\sf GAP}~\cite{GAP} with the aid of the package {\sf
  CHEVIE}~\cite{chevie}. Thus, it is feasible to actually compute the
classification explicitly for $W$ of a fixed rank. Indeed, we present
the classification in cases $W$ is a Weyl group of exceptional type,
or a non-crystallographic Coxeter group of type $H_3$ and $H_4$, in
the form of explicit lists.

Our second aim in this note is to study the map which assigns to a
given conjugacy class of reflection subgroups the conjugacy class of
its Coxeter elements. It is well
known~\cite[Lem.~3.5]{OrlikSolomon1983} that if $R$ and $R'$ are
parabolic subgroups containing Coxeter elements $c$ and $c'$
respectively, then $R$ and $R'$ are conjugate subgroups if and only if
$c$ and $c'$ are conjugate in $W$. Thus, conjugacy classes of
parabolic subgroups are parametrized by a distinguished set of
conjugacy classes of elements in $W$.  For general reflection
subgroups this need not be the case.  However, the following somewhat
surprising result shows that when $T$ is a single conjugacy class,
conjugacy classes of reflection subgroups are still parametrized by
the conjugacy classes of their Coxeter elements in all but one case.

\begin{Theorem}
  \label{thm:main}
  Suppose that $T$ is a single conjugacy class.  Let $R$ and $R'$ be
  reflection subgroups containing Coxeter elements $c$ and $c'$,
  respectively. Then $R$ and $R'$ are conjugate if and only if $c$ and
  $c'$ are conjugate in $W$; unless $W$ is of type $E_8$ and $R$ and
  $R'$ are of types $A_1 A_7$ and $A_3 D_5$, respectively.  In this
  case, $c$ and $c'$ are conjugate, while $R$ and $R'$ are not.
\end{Theorem}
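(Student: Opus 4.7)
The plan is to apply the classification of reflection subgroups developed in the preceding sections one irreducible type at a time. First I would observe that if $W$ decomposes as a direct product, then every reflection subgroup $R$, and every Coxeter element of $R$, decomposes correspondingly, so both the conjugacy class of $R$ and that of its Coxeter element can be tested factorwise. We may therefore assume $W$ is irreducible. The condition that $T$ be a single $W$-class is equivalent to the odd-labeled edges of the Coxeter diagram forming a connected graph, which restricts $W$ to the types $A_n$, $D_n$, $E_6$, $E_7$, $E_8$, $H_3$, $H_4$, and $I_2(m)$ for $m$ odd. One direction of the statement is formal: if $R' = wRw\inverse$ then $wcw\inverse$ is a Coxeter element of $R'$, so is conjugate in $R'$ to $c'$, hence in $W$. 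The content lies in the reverse direction.

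For the two infinite families the argument is combinatorial. In type $A_n$ the group $W$ is $\Sym_{n+1}$; every reflection subgroup is a Young subgroup, whose conjugacy class is determined by the multiset of orbit sizes $\geq 2$, and whose Coxeter element is the product of one cycle of each such length. Two such elements are $\Sym_{n+1}$-conjugate precisely when the multisets agree, so the map of the theorem is injective. In type $D_n$ the classification presents reflection subgroups as direct products of $A$- and $D$-factors with ranks constrained by the Borel--de Siebenthal procedure. Viewing these inside the group of signed permutations, the signed cycle type of a Coxeter element recovers the list of factor ranks, so again the map is injective.

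For the exceptional types I would use the explicit tables of conjugacy classes of reflection subgroups from the preceding sections together with the \textsf{GAP}/\textsf{CHEVIE} implementation. Each class of $R \subseteq W$ is paired with the $W$-class of a chosen Coxeter element of $R$, and injectivity is checked directly on the finite list. The cases $H_3$, $H_4$, and $I_2(m)$ (for $m$ odd) are small and yield no collisions; similarly $E_6$ and $E_7$. Only $E_8$ produces a coincidence, and it is unique: the rank-$8$ reflection subgroups of types $A_1A_7$ and $A_3D_5$ have Coxeter elements lying in a common $W$-class—a phenomenon already familiar from Carter's admissible-diagram classification, where this particular $E_8$-class is labeled by two inequivalent diagrams, while the subgroups themselves are not $W$-conjugate because their isomorphism types as abstract Coxeter groups differ.

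The main obstacle is therefore the verification in type $E_8$ that (i) the Coxeter elements of the $A_1A_7$ and $A_3D_5$ reflection subgroups are indeed $W$-conjugate, and (ii) no further pair of non-conjugate reflection subgroups produces a coincidence. Both are finite computations carried out in \textsf{GAP}/\textsf{CHEVIE}, confirmed by comparing cycle structure, order, and character values on the relevant conjugacy classes of $W$. Once these are established, combining with the combinatorial arguments for the simply-laced infinite families and the routine checks for the remaining exceptional types yields the stated theorem.
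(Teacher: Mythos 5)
Your proposal is correct and takes essentially the same route as the paper, whose proof likewise deduces the theorem from the type-by-type classification of reflection subgroups and the computation of $\gamma$: combinatorially in types $A_n$ and $D_n$ (Theorems \ref{thm:a} and \ref{thm:d}) and by explicit {\sf GAP}/{\sf CHEVIE} computation for the exceptional and non-crystallographic types (Tables \ref{tab:e6}--\ref{tab:h4}), with the single collision $A_1 A_7$ versus $A_3 D_5$ in $E_8$. The only detail you gloss over is that in type $D_n$ with $n$ even both certain subgroup classes and the corresponding element classes of even cycle type split into $\pm$ pairs, so the signed cycle type alone does not determine the $W(D_n)$-conjugacy class; but the two splittings match up, exactly as recorded in Theorem \ref{thm:d}, so your argument is unaffected.
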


This theorem is an immediate consequence of the classification of the
reflection subgroups of $W$ and our computation of the map $\gamma$,
which is defined as follows.  Denote by $\RR$ the set of conjugacy
classes of reflection subgroups of $W$ and by $\CC$ the set of
conjugacy classes of elements of $W$.  Then, denote by
\[
\gamma \colon \RR \to \CC
\]
the map defined by $\gamma([R]) = [c]$, which associates to the conjugacy
class $[R]$ of a reflection subgroup $R$ of $W$ the conjugacy class $[c]$ in
$W$ of a Coxeter element $c$ in $R$.  This map $\gamma$ is well-known to be
a bijection for Coxeter groups $W$ of type $A_n$, with both the
conjugacy classes of reflection subgroups and the conjugacy classes of
elements of $W$ labeled by the partitions of~$n$.  In \S\ref{sec:mick} we
state necessary and sufficient conditions for the map $\gamma$ to be an
injection
(note that if $\gamma$ is an injection, then the statement of Theorem \ref{thm:main} holds).
The image of $\gamma$ is computed explicitly for each type of
irreducible Coxeter group in \S\ref{sec:classical} - \S\ref{sec:brian} and
Tables \ref{tab:e6} - \ref{tab:h4}.  The classes in the image of $\gamma$
are also known in the literature as \emph{semi-Coxeter classes}, e.g.,
see~\cite{CarterElkington1972}.  Properties of the map $\gamma$ have not
been considered in the earlier literature on the subject.

The rest of this note is organized as follows. In \S\ref{sec:mick} we recall
some definitions, give some preliminary results, and state precisely when
the map $\gamma$ is injective or surjective. 
\S\ref{sec:classical} contains explicit combinatorial rules
that describe the map $\gamma$ for classical types, and demonstrate
that $\gamma$ is surjective but not injective for Coxeter groups of
type $B_n$ ($n \geq 2$), and that $\gamma$ is injective but not
surjective for Coxeter groups of type $D_n$ ($n \geq 4$).
The classification of conjugacy classes of
reflection subgroups and the explicit computation of the map $\gamma$ is
given for classical Weyl groups in \S\ref{sec:classical}
(with the examples of $W(B_5)$ and $W(D_6)$ in Tables \ref{tab:b5} and \ref{tab:d6} respectively); 
for exceptional
Weyl groups in \S\ref{sec:keith} and Tables \ref{tab:e6} - \ref{tab:g2}; and for
non-crystallographic Coxeter groups in \S\ref{sec:brian} and Tables
\ref{tab:h3} and \ref{tab:h4}.

\section{Preliminaries}
\label{sec:mick}

For general information on Coxeter groups, root systems, and groups
generated by reflections, we refer the reader to
Bourbaki~\cite{Bourbaki1968}.

For the rest of this note we fix a $W$-invariant, positive definite,
bilinear form on $E$.

Notice first that if $R= \langle Y \rangle$ is a reflection subgroup of
$W$, then $R$ is a Coxeter group in its own right. 
Moreover, the orthogonal complement of the space of fixed points of
$R$ in $E$ is an $R$-stable subspace that affords the reflection
representation of $R$.
 
Recall that a \emph{parabolic subgroup} of $W$ is a subgroup of the form
\[
W_V=\{\, w\in W\mid w(v)=v \ \, \forall v\in V\,\},
\]
where $V$ is a subspace of $E$. By Steinberg's Theorem~\cite[Thm.~1.5]{Steinberg64}, parabolic subgroups
are generated by the reflections they contain and so are reflection
subgroups.

For a subset $X$ of $W$ let
\[
\Fix(X)= \{\, v\in E\mid x(v)=v\  \, \forall x\in X\,\}
\]
denote the set of fixed points of $X$ in $E$. 
Following Solomon~\cite{Solomon1976} and Bergeron et al.~\cite{BergeronEtAl1992},
we define the \emph{parabolic closure}
of $X$ to be the parabolic subgroup $\cl (X)= W_{\Fix(X)}$ of~$W$. Obviously
$X\subseteq \cl (X)$ and it follows from Steinberg's Theorem that $\cl(\cl(
X)) = \cl (X)$. When $X=\{w\}$ we simply write $\Fix(w)$ and $\cl(w)$ instead
of $\Fix(\{w\})$ and $\cl(\{w\})$, respectively.
For a discussion of parabolic closures of finitely generated
subgroups of arbitrary Coxeter systems, see the recent paper by Dyer~\cite{Dyer2010}.

For $w, x\in W$ we denote the 
$w$-conjugate $w^{-1}xw$ of $x$ by $x^w$ and for a subset $X$ of $W$
let $X^w = \{x^w \mid x \in X\}$ denote the $w$-conjugate of $X$.

The \emph{rank} of a Coxeter group is the cardinality of a Coxeter
generating set, or equivalently, the dimension of its reflection
representation. It follows from the next lemma that every reflection
subgroup is a maximal rank reflection subgroup of its parabolic closure.

\begin{Lemma}
  \label{lemma:rank}
  Let $R$ be a reflection subgroup of $W$.  Then $R$ and its parabolic
  closure $\cl(R)$ have the same rank as Coxeter groups.
\end{Lemma}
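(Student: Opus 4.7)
The plan is to reduce the rank equality to an equality of fixed-point sets, using the characterization of the rank of a reflection subgroup as the dimension of its reflection representation.

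First I would recall the fact stated just before the lemma: for any reflection subgroup $S$ of $W$, the orthogonal complement of $\Fix(S)$ in $E$ is an $S$-stable subspace which affords the reflection representation of $S$. Since the rank of a Coxeter group equals the dimension of its reflection representation, this gives the formula
\[
\text{rank}(S) \;=\; \dim E - \dim \Fix(S)
\]
for every reflection subgroup $S$. Applying this to $R$ and to $\cl(R)$, it suffices to prove that $\Fix(R) = \Fix(\cl(R))$.

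For the equality of fixed-point sets, one containment is immediate: since $R \subseteq \cl(R)$, every vector fixed by $\cl(R)$ is fixed by $R$, so $\Fix(\cl(R)) \subseteq \Fix(R)$. For the reverse containment, observe that by definition $\cl(R) = W_{\Fix(R)}$ consists of those elements of $W$ that fix every vector in $\Fix(R)$, so every $v \in \Fix(R)$ is fixed by every element of $\cl(R)$, i.e.\ $\Fix(R) \subseteq \Fix(\cl(R))$. Combining the two inclusions yields $\Fix(R) = \Fix(\cl(R))$, and hence $\text{rank}(R) = \text{rank}(\cl(R))$.

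I do not expect any genuine obstacle: the lemma is essentially a formal consequence of Steinberg's theorem (which identifies parabolic subgroups with stabilizers of subspaces and thereby makes $\cl(R)$ well-defined as a reflection subgroup) together with the standard description of the reflection representation of a reflection subgroup as the orthogonal complement of its fixed-point set. The only point requiring a little care is the first step, namely the assertion that the rank of a reflection subgroup equals $\dim E - \dim \Fix(R)$; this is where the remark about the reflection representation of $R$ quoted in the excerpt is used.
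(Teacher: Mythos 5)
Your proposal is correct and follows essentially the same argument as the paper: both rest on the identification of the rank of a reflection subgroup with the codimension of its fixed-point space, combined with the two containments coming from $R \subseteq \cl(R)$ and from $\cl(R) = W_{\Fix(R)}$ fixing $\Fix(R)$ pointwise. The only cosmetic difference is that you package the two containments as the equality $\Fix(R) = \Fix(\cl(R))$ before passing to ranks, whereas the paper states them directly as two inequalities of ranks.
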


\begin{proof}
  The rank of $R$ is the codimension of its fixed point space
  $\Fix(R)$. The rank of $\cl(R)$, as stabilizer of $\Fix(R)$, is not
  larger than the rank of $R$, and, since $R \subseteq \cl(R)$, not
  smaller than the rank of $R$ either.
\end{proof}

As noted in the Introduction, the classification of conjugacy classes of
reflection subgroups of $W$ reduces to (1) classifying conjugacy classes of
parabolic subgroups of $W$ and (2) classifying maximal rank reflection
subgroups of irreducible Coxeter groups.

The conjugacy classes of parabolic subgroups of an irreducible finite
Coxeter group are described in Chapter~2 and Appendix~A of
\cite{GeckPfeiffer2000} (see also \cite[Prop.\ 6.3]{BalaCarter1976}). In
most cases, two parabolic subgroups are conjugate if and only if they have
the same type. However, in type $D_{2m}$ there are two conjugacy classes of
parabolic subgroups of type $A_{k_1} \times A_{k_2} \times \cdots \times
A_{k_r}$ with all $k_i$ odd so that $2m = \sum (k_i +1)$ and in type $E_7$
there are two classes of parabolic subgroups for each of the types 
$A_1^3$, $A_1 A_3$, and $A_5$.

For a given $W$, classifying the maximal rank reflection subgroups of $W$ up
to conjugacy amounts to listing, up to conjugacy in $W$, all subsets $Y$
of $T$ such that
\[
\Span{Y} \cap T=Y \qquad \text{and} \qquad \Fix(Y)=\{0\}.
\]
For a Coxeter group of small rank (including the non-crystallographic
types $H_3$ and $H_4$) these sets can be systematically enumerated. As
described below, for crystallographic Coxeter groups, that is, Weyl groups,
using the algorithm of Borel--De Siebenthal~\cite{BorelDeSiebenthal1949} is
computationally more efficient than classifying subsets of $T$.

By a \emph{root system} in $E$ we mean a reduced root system in the
sense of Bourbaki~\cite[Ch.~VI]{Bourbaki1968}. Suppose $\Phi$ is a root
system in $E$. The Weyl group of $\Phi$, $W(\Phi)$, is the group of linear
transformations of $E$ generated by the reflections through the hyperplanes
orthogonal to the roots in $\Phi$. The \emph{dual} of $\Phi$ is the root
system $\tilde \Phi=\{\ \frac 1 {|\alpha|^2} \alpha\mid \alpha \in
\Phi\,\}$. Note that $W(\Phi) = W(\tilde \Phi)$. By a \emph{Weyl group} or a
\emph{crystallographic Coxeter group} we mean the Weyl group of a root
system in $E$.

Suppose that $W=W(\Phi)=W(\tilde\Phi)$ is a Weyl group. We may extract a
classification of the maximal rank reflection subgroups of $W$ from the
arguments in \cite{Carter1972}. Each maximal rank reflection subgroup of $W$
is again a Weyl group and thus is the Weyl group of a maximal rank subsystem
of $\Phi$ or $\tilde \Phi$. By work of Dynkin, two maximal rank subsystems are
isomorphic if and only if they are equivalent under the action of $W$; see
\cite[Prop.\ 32]{Carter1972} or \cite[Ch.~VI, \S 4, ex.~4]{Bourbaki1968}. By
the classification of root systems, two root systems are isomorphic if and
only if they have the same Dynkin diagram. We have already observed that a
root system and its dual have the same Weyl group. Thus, the conjugacy
classes of maximal rank reflection subgroups of $W$ are in one-one
correspondence with the set of Coxeter graphs arising from Dynkin diagrams
of maximal rank subsystems of $\Phi$ and $\tilde \Phi$.

The Borel--De Siebenthal algorithm produces all maximal rank subsystems of
$\Phi$ and $\tilde \Phi$ as follows (see~\cite[p.~8]{Carter1972}).
\begin{enumerate}
\item Add a node to the Dynkin diagram of $\Phi$ corresponding to the
  negative of the highest root of $\Phi$. Take the extended Dynkin diagram
  and remove one node in all possible ways.
\item Add a node to the Dynkin diagram of $\tilde \Phi$ corresponding to the
  negative of the highest root of $\tilde \Phi$. Take the extended Dynkin
  diagram and remove one node in all possible ways. 
\item Repeat steps (1) and (2) with each of the resulting Dynkin diagrams
  until no new diagrams appear.
\end{enumerate}

This algorithm does not apply to the non-crystallographic groups $W(H_3)$,
$W(H_4)$ and $W(I_2(m))$, but these groups are sufficiently small that all relevant information can be calculated directly

We now turn to the map $\gamma$ which assigns to a given conjugacy class of
reflection subgroups of $W$ the conjugacy class of its Coxeter elements.

Recall~\cite[Ch.~V, \S6, no.~1]{Bourbaki1968} that a \emph{Coxeter
  element} in $W$ is the product of the elements of some Coxeter
generating set of $W$ taken in some order.  All Coxeter elements of
$W$ are conjugate in $W$.

Suppose that $R$ is a reflection subgroup of $W$. Then $R$ is a Coxeter
group and so we may consider Coxeter elements in $R$. If $c$ is a
Coxeter element in $R$ and $w$ is in $W$, it is easy to see that $c^w$
is a Coxeter element in $R^w$. Thus, conjugate reflection subgroups of $W$
have conjugate Coxeter elements and the map $\gamma$ is well-defined.

The proof of Theorem \ref{thm:main} follows immediately from the
classification of reflection subgroups and the explicit computation of the
map $\gamma$ in Theorems \ref{thm:a} and \ref{thm:d} and Tables \ref{tab:e6}
- \ref{tab:e8}. It would be interesting to have a conceptual explanation of
why the single exception occurs in Theorem \ref{thm:main}.  More generally,
from Theorems \ref{thm:a}, \ref{thm:b}, and \ref{thm:d} along with Tables
\ref{tab:e6} - \ref{tab:h4}, we derive necessary and sufficient conditions
for the map $\gamma$ to be injective.

\begin{Theorem}
  Suppose that $W$ is irreducible and not of type $E_8$. Then the map
  $\gamma \colon \RR\to \CC$ is injective if and only if $T$ is a single
  conjugacy class in~$W$. 

  If $W$ is of type $E_8$, then $\gamma$ is not injective:  the
  conjugacy classes of reflection subgroups of types $A_1 A_7$ and $A_3
  D_5$ both map to the same conjugacy class of elements of $W$.
\end{Theorem}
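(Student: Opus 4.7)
The plan is to prove the theorem by a case-by-case verification, drawing on the classification of reflection subgroups and the explicit descriptions of $\gamma$ developed in the remainder of the paper.

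For the direction that ``$\gamma$ injective'' implies ``$T$ is a single class'', I argue contrapositively and exhibit, in each irreducible type in which $T$ splits into two conjugacy classes --- namely $B_n$ for $n \geq 2$, $F_4$, $G_2$, and $I_2(m)$ for $m$ even with $m \geq 4$ --- an explicit pair of non-conjugate reflection subgroups with conjugate Coxeter elements. If $W$ contains a $B_2$ subsystem (this covers $B_n$, $F_4$, and $I_2(4k)$), then inside this $B_2$ the two short mutually perpendicular reflections generate an $A_1 \times A_1$ of short roots and the two long mutually perpendicular reflections generate an $A_1 \times A_1$ of long roots; these two subgroups are non-conjugate because conjugation preserves root lengths, yet each has Coxeter element equal to $-I$ on the $B_2$-plane and the identity on its orthogonal complement. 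The remaining case, $W = I_2(2m)$ with $m$ odd (in particular $G_2$), requires a different construction, since any perpendicular pair of reflections here must mix a short and a long root. Instead, pick an odd divisor $k \geq 3$ of $m$; the $k$ even-indexed reflecting lines of $I_2(2m)$ all carry short roots and generate an $I_2(k)$, while the $k$ odd-indexed ones all carry long roots and generate a second $I_2(k)$; these two subgroups are non-conjugate but share a common Coxeter element, namely the rotation by $2\pi/k$.

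For the converse direction --- $T$ a single class and $W \neq E_8$ imply $\gamma$ is injective --- the proof invokes the explicit case-analyses collected later in the paper. In type $A_n$, $\gamma$ is a bijection by Theorem~\ref{thm:a}. In type $D_n$ for $n \geq 4$, Theorem~\ref{thm:d} describes $\gamma$ explicitly as an injection. In types $E_6$, $E_7$, $H_3$, and $H_4$, injectivity is read off Tables~\ref{tab:e6}, \ref{tab:e7}, \ref{tab:h3}, and \ref{tab:h4} respectively, by inspecting that the Coxeter-class column has pairwise distinct entries. In type $I_2(m)$ with $m$ odd, the only conjugacy classes of reflection subgroups are those of the trivial group, the unique $A_1$, and $W$ itself, whose Coxeter elements (the identity, a reflection, and a rotation of order $m$) lie in three manifestly distinct $W$-conjugacy classes. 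The $E_8$ clause is likewise purely tabular: Table~\ref{tab:e8} records, for each conjugacy class of reflection subgroup of $W$, the $W$-conjugacy class of its Coxeter elements, and one reads off that the rows labelled $A_1 A_7$ and $A_3 D_5$ carry the same entry in that column, while by the classification these subgroups themselves are not conjugate.

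The main obstacle is conceptual rather than technical: once the theorem is reduced to a finite verification, there is no a priori explanation of why $E_8$ alone should harbour this anomaly, and a structural understanding of the $A_1 A_7 \leftrightarrow A_3 D_5$ coincidence seems to lie beyond the reach of the present case-analytic approach.
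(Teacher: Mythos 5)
Your overall route is the same as the paper's: both directions are settled case by case from the classification of reflection subgroups (Theorems \ref{thm:a}, \ref{thm:b}, \ref{thm:d}, \ref{thm:i2} and Tables \ref{tab:e6}--\ref{tab:h4}), and your handling of types $A_n$, $B_n$, $D_n$, $E_6$, $E_7$, $E_8$, $F_4$, $G_2$, $H_3$, $H_4$ is correct; in particular your ``perpendicular short pair versus perpendicular long pair'' construction is exactly what the tables exhibit in types $B_n$ and $F_4$ (the classes $B_1^2$ and $D_2$ in Table \ref{tab:b5}, resp.\ $A_1^2$ and $\tilde A_1^2$ in Table \ref{tab:f4}). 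However, both of your dihedral arguments contain genuine errors. First, the $B_2$-subsystem construction fails for $I_2(m)$ with $m\equiv 0 \pmod 8$. Write $s_j$ for the reflection in the line at angle $j\pi/m$, $j=0,\dots,m-1$; the two $W$-classes of reflections are $\{s_j : j \text{ even}\}$ and $\{s_j : j \text{ odd}\}$. An $I_2(4)$-subsystem consists of the four lines indexed by $j_0,\, j_0+m/4,\, j_0+m/2,\, j_0+3m/4$, and when $m/4$ is even these indices all have the same parity, so all four of its reflections lie in a \emph{single} $W$-class; worse, the rotation through $\pi/4$ lies in $W$ (since $8\mid m$) and conjugates the pair $\langle s_{j_0}, s_{j_0+m/2}\rangle$ to the pair $\langle s_{j_0+m/4}, s_{j_0+3m/4}\rangle$, so the two subgroups you claim are non-conjugate are in fact conjugate. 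The conclusion you want is still true, but the correct construction is to take one perpendicular pair from each reflection class, namely $\langle s_0, s_{m/2}\rangle$ and $\langle s_1, s_{1+m/2}\rangle$ (possible for every $m\equiv 0\pmod 4$ because $m/2$ is even); both have Coxeter element $-I$ and they are non-conjugate since their reflections lie in different $W$-classes.

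Second, your description of the odd dihedral case is false: for $m$ odd the conjugacy classes of reflection subgroups of $I_2(m)$ are not just those of $\emptyset$, $A_1$ and $W$, but include one class of type $I_2(d)$ for \emph{every} divisor $d>1$ of $m$ (Theorem \ref{thm:i2}(1)); for instance $I_2(15)$ contains reflection subgroups of types $I_2(3)$ and $I_2(5)$. Injectivity of $\gamma$ still holds, because the Coxeter elements of the $I_2(d)$-subgroups are rotations through $2\pi/d$, and rotations through $2\pi/d$ and $2\pi/d'$ are non-conjugate for distinct divisors $d\neq d'$, but your argument as written simply does not see these classes. Both slips are confined to the dihedral case and each is repairable in a sentence; with those repairs your proof is correct and is essentially the paper's, with the modest added value that the non-injectivity direction is obtained from explicit uniform constructions rather than read off the tables, and your closing remark about the lack of a conceptual explanation for the $E_8$ coincidence echoes the paper's own comment.
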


Hence, we conclude that the map $\gamma$
 is injective if and only if $W$ has type $A_n$, $D_n$,
$E_6$, $E_7$, $H_3$, $H_4$ and $I_2(m)$ with $m$ odd. Moreover, it follows from the
computations in \S \ref{sec:classical} - \S \ref{sec:brian} that, if $W$ is irreducible, then 
$\gamma$ is surjective when $W$  has type $A_n$, $B_n$, or $G_2$. Notice that when the map 
$\gamma$  is surjective, every
conjugacy class of $W$ contains a representative that is a Coxeter element
in some reflection subgroup of $W$.

It is easy to see that $\cl(X^w)= \cl(X)^w$ when $X\subseteq W$ and $w\in
W$. In particular, conjugate reflection subgroups have conjugate parabolic
closures. Similarly, conjugate elements in $W$ have conjugate parabolic
closures. In particular, if $c\in R$ and $c'\in R'$ are Coxeter
elements in reflection subgroups $R$ and $R'$, and $c$ and $c'$ are
conjugate in $W$, then $\cl(c)$ and $\cl(c')$ are conjugate in~$W$.

\begin{Lemma} 
  \label{la:7}
  Suppose $R$ is a reflection subgroup of $W$ and $x\in R$ is not contained
  in any proper parabolic subgroup of $R$. Then $\cl(x)=
  \cl(R)$. Consequently, if $V$ is a subspace of $E$ and $c$ is a
  Coxeter element of $R$ that is conjugate to an element of $W_V$, then
  $R$ is conjugate to a subgroup of $W_V$.
\end{Lemma}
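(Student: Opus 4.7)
The plan is to decompose the ambient space orthogonally as $E = V_R \oplus \Fix(R)$, where $V_R = \Fix(R)^{\perp}$ carries the reflection representation of $R$, and then translate the hypothesis on $x$ into a statement about $\Fix(x)$. One inclusion is immediate: since $x \in R$, every vector fixed by $R$ is fixed by $x$, so $\Fix(R) \subseteq \Fix(x)$ and $\cl(x) = W_{\Fix(x)} \subseteq W_{\Fix(R)} = \cl(R)$.

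For the reverse inclusion I would argue as follows. Because the bilinear form on $E$ is $W$-invariant, $R$ stabilizes both summands of $E = V_R \oplus \Fix(R)$ and acts trivially on the second summand, so $\Fix(x) = (\Fix(x) \cap V_R) \oplus \Fix(R)$. The parabolic subgroups of $R$, regarded intrinsically as the Coxeter group acting on $V_R$, are precisely the stabilizers in $R$ of subspaces of $V_R$; in particular the smallest parabolic subgroup of $R$ containing $x$ is $R_{\Fix(x) \cap V_R}$. The hypothesis on $x$ says this parabolic is all of $R$, which forces $\Fix(x) \cap V_R = 0$. Hence $\Fix(x) = \Fix(R)$ and $\cl(x) = W_{\Fix(x)} = W_{\Fix(R)} = \cl(R)$.

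For the consequence, I would invoke the standard fact (Bourbaki, Ch.~V, \S6) that a Coxeter element $c$ of $R$ has no nonzero fixed vector in $V_R$, and therefore lies in no proper parabolic subgroup of $R$; the first part then gives $\cl(c) = \cl(R)$. If $c^w \in W_V$ for some $w \in W$, then $W_V$ is parabolic and contains the smallest parabolic subgroup containing $c^w$, so $\cl(R)^w = \cl(c)^w = \cl(c^w) \subseteq W_V$. Since $R \subseteq \cl(R)$, this yields $R^w \subseteq W_V$. The only subtle step is the middle one: correctly identifying parabolic subgroups of $R$ with stabilizers of subspaces of $V_R$ and observing that the hypothesis forces $\Fix(x)$ to coincide with $\Fix(R)$, not merely to contain it.
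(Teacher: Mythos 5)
Your proof is correct, and its second half is literally the paper's own argument: the chain $R^w\subseteq \cl(R)^w=\cl(c)^w=\cl(c^w)\subseteq W_V$ appears verbatim in the paper's proof. Where you genuinely differ is the first half. The paper does not prove $\Fix(x)=\Fix(R)$ at all; it simply cites \cite[\S2]{Carter1972} for this equality and deduces $\cl(x)=\cl(R)$. You instead give a self-contained argument: decompose $E=V_R\oplus\Fix(R)$, note $\Fix(x)=(\Fix(x)\cap V_R)\oplus\Fix(R)$, identify the smallest parabolic subgroup of $R$ containing $x$ as the stabilizer in $R$ of $\Fix(x)\cap V_R$, and conclude from the hypothesis that $\Fix(x)\cap V_R\subseteq \Fix(R)\cap V_R=\{0\}$. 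This buys transparency --- it makes visible that the hypothesis on $x$ is \emph{equivalent} to $\Fix(x)\cap V_R=\{0\}$ --- at the cost of one implicit appeal: the identification of parabolic subgroups of $R$ with stabilizers in $R$ of subspaces of $V_R$. Under the paper's convention (parabolic $=$ pointwise stabilizer of a subspace) this is just the definition applied to $R$ acting on its reflection representation; under the Coxeter-theoretic convention (conjugates of standard parabolics) it is Steinberg's theorem \cite{Steinberg64} applied to $R$, and that citation is worth making explicit. Finally, you justify via Bourbaki \cite[Ch.~V, \S6]{Bourbaki1968} that a Coxeter element of $R$ has no nonzero fixed vector in $V_R$, hence lies in no proper parabolic of $R$; the paper uses this fact silently when it writes $\cl(R)^w=\cl(c)^w$, so making it explicit is a small improvement rather than a deviation.
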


\begin{proof}
  It is shown in \cite[\S2]{Carter1972} that
  $\Fix(x)=\Fix(R)$. It follows immediately that $\cl(x)= \cl(R)$.

  For the second statement, let $w \in W$ be such that $c^w$ is in
  $W_V$. Then $\cl(c^w) \subseteq W_V$ and so $R^w\subseteq \cl(R)^w
  =\cl(c)^w= \cl(c^w) \subseteq W_V$.
\end{proof}

Now suppose that $R$ and $R'$ are reflection subgroups of $W$ containing
Coxeter elements $c$ and $c'$ respectively. Then, if $c$ and $c'$ are
conjugate in $W$, $\cl(R)$ and $\cl(R')$ are conjugate.  In other words,
even if $\gamma$ is not injective, reflection subgroups with non-conjugate
parabolic closures must have non-conjugate Coxeter elements. This
observation shows that conjugacy classes containing Coxeter elements of
reflection subgroups are separated by the parabolic closures of reflection
subgroups that contain them. 

Note that Lemma~\ref{la:7} generalizes \cite[Lem.~7]{Solomon1976}
which is the special case of Lemma \ref{la:7} when $R$ is a parabolic
subgroup of $W$.  In the same way, Theorem~\ref{thm:main} generalizes
the forward implication of~\cite[Lem.~3.5]{OrlikSolomon1983}.

\section{The classical Weyl groups}
\label{sec:classical}

A \emph{partition} $\lambda = (\lambda_1, \dots, \lambda_k)$ is a
non-increasing finite sequence of positive integers $\lambda_1 \geq
\dots \geq \lambda_k > 0$.  The integers $\lambda_i$ are called the
\emph{parts} of the partition $\lambda$.  If $\sum_{i=1}^k \lambda_i =
n$, then $\lambda$ is a partition of $n$ and we write
$\lambda \partitions n$.  The unique partition of $n = 0$ is the \emph{empty
partition}, denoted by $\emptyset$.  We denote by $\ell(\lambda) = k$
the length of the partition $\lambda = (\lambda_1, \dots, \lambda_k)$,
e.g., $\ell(\emptyset) = 0$.
A partition of $n$ is \emph{even}, if all its parts are even, i.e., if
it has the form $\lambda = (2 \mu_1, \dots, 2 \mu_k)$ for some
partition $\mu$ of $n/2$.  The \emph{join} $\lambda^1 \cup \lambda^2$
of two partitions $\lambda^1 \partitions n_1$ and
$\lambda^2 \partitions n_2$ is the partition of $n_1 + n_2$ consisting
of the parts of both $\lambda^1$ and $\lambda^2$, suitably arranged.
The \emph{sum} of a partition $\lambda = (\lambda_1, \dots,
\lambda_k)$ and an integer $m$ is the partition $\lambda + m =
(\lambda_1 + m, \dots, \lambda_k+m)$.  We write $\lambda > m$ if
$\lambda_i > m$ for all parts $\lambda_i$ of $\lambda$.
Note that, vacuously, $\emptyset > m$ for all $m$.

The symmetric group $\Sym_n$ on $n$ points is a Coxeter group of type
$A_{n-1}$ with Coxeter generators $s_i = (i, i+1)$, $i = 1, \dots, n-1$.
The \emph{cycle type} of $w \in \Sym_n$ is the partition $\lambda$ of $n$
which has the lengths of the cycles of $w$ on $\{1, \dots, n\}$ as its parts
(here a fixed point contributes a cycle of length $1$). Of course, two
permutations in $\Sym_n$ are conjugate if and only if they have the same
cycle type. The next theorem is well-known, and can easily be deduced from
Bourbaki~\cite[Ch.~VI, \S 4, ex.~4]{Bourbaki1968}.

\begin{Theorem}
\label{thm:a}
Let $W$ be a Coxeter group of type $A_n$. Then every reflection subgroup of
$W$ is a parabolic subgroup. Moreover, the map $\gamma$ from conjugacy
classes of reflection subgroups to conjugacy classes of $W$ is a
bijection. Both sets are in one-to-one correspondence with the set of all
partitions of $n$.
\end{Theorem}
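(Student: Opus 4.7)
The plan is to realize $W = W(A_n)$ concretely as $\Sym_{n+1}$, acting on $\{1,\dots,n+1\}$, and to use that the reflections are precisely the transpositions. The whole argument is then a graph-theoretic unpacking of which subgroups one can generate using transpositions, combined with an identification of Coxeter elements as products of such generators.

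First I would prove that every reflection subgroup is a Young subgroup, and hence a parabolic subgroup. Given a set $Y\subseteq T$ of transpositions, form the graph $\Gamma(Y)$ on $\{1,\dots,n+1\}$ whose edges are the pairs $\{i,j\}$ with $(i,j)\in Y$. A straightforward induction on $|Y|$ shows that $\langle Y\rangle = \prod_{B}\Sym_B$, where $B$ ranges over the vertex sets of the connected components of $\Gamma(Y)$ of size $\geq 2$. Such a Young subgroup is the pointwise stabilizer of the subspace $V\subseteq E$ cut out by the equations $x_i=x_j$ for $i,j$ in a common block, so it equals $W_V$ and is parabolic in the sense of the introduction. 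In particular $R$ is conjugate in $W$ to the standard parabolic subgroup determined by the block sizes of the set partition underlying $\Gamma(Y)$.

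Next I would set up the two bijections with partitions. Since $\Sym_{n+1}$ acts transitively on the set of set partitions of $\{1,\dots,n+1\}$ of a fixed shape, the conjugacy classes of reflection subgroups are parametrized by the unordered multiset of block sizes, i.e.\ by partitions of $n+1$; after recording only the data $(\lambda_i-1)$ of the Coxeter types $A_{\lambda_i-1}$, this matches the indexing by partitions used in the theorem. Conjugacy classes of elements of $\Sym_{n+1}$ are parametrized by cycle type, i.e.\ by the same set of partitions.

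Finally I would check that $\gamma$ matches these two parametrizations. A Coxeter element of a factor $\Sym_B$ is a product of $|B|-1$ adjacent transpositions along a spanning path of $B$, which is a single $|B|$-cycle; hence a Coxeter element of a Young subgroup $\prod_B \Sym_B$ has cycle type $(|B_1|,\dots,|B_k|)$. This shows that the two bijections identify the same partition with the class $[R]$ and with the class $\gamma([R])$, so $\gamma$ is the identity on the common index set and in particular is a bijection. The only step that requires a little care is the graph-theoretic claim that $\langle Y\rangle = \prod_B\Sym_B$; this is the main obstacle but is standard, following from the fact that any transposition $(i,j)$ with $i,j$ in the same component $B$ of $\Gamma(Y)$ can be written as a conjugate of a generator via a sequence of transpositions along a path in $\Gamma(Y)$.
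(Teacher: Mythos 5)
Your argument is correct in substance, but note that the paper does not actually prove this theorem: it is stated as well known, with a pointer to Bourbaki, Ch.~VI, \S4, Exercise~4, and no argument is given. So your proposal is not a variant of the paper's proof but a self-contained replacement for a citation. The route you take --- identify $W(A_n)$ with $\Sym_{n+1}$, show that a set $Y$ of transpositions generates the Young subgroup $\prod_B \Sym_B$ over the connected components $B$ of the graph $\Gamma(Y)$ (via conjugating generators along paths), observe that this Young subgroup is exactly the pointwise stabilizer $W_V$ of the subspace cut out by $x_i = x_j$ for $i,j$ in a common block (so reflection subgroups are parabolic, and conversely by Steinberg's theorem every parabolic subgroup arises this way), and finally match block sizes against cycle types using the fact that a Coxeter element of $\Sym_B$ is a single $\Size{B}$-cycle --- is the standard elementary argument, essentially the content of the Bourbaki exercise the paper invokes. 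The trade-off is the obvious one: the paper's treatment costs one sentence, while yours is verifiable from scratch; your graph-theoretic lemma is exactly the right key step and is sound (for the parametrization you should also note, as is immediate, that conjugation carries orbits to orbits, so conjugate Young subgroups have equal block-size multisets).

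One detail needs repair: the indexing. With the paper's own convention that $\Sym_n$ has type $A_{n-1}$, a group $W$ of type $A_n$ is $\Sym_{n+1}$, so both conjugacy classes of reflection subgroups (block sizes, singletons included) and conjugacy classes of elements (cycle type) are naturally indexed by partitions of $n+1$, and under this common indexing your argument shows $\gamma$ is the identity map. Your attempt to reconcile this with the theorem's phrase ``partitions of $n$'' by recording the ranks $\lambda_i - 1$ of the factors $A_{\lambda_i - 1}$ does not give a bijection onto partitions of $n$: for $n = 2$ the partitions $(3)$, $(2,1)$, $(1,1,1)$ of $3$ are sent to $(2)$, $(1)$, $\emptyset$, which is not the set of partitions of $2$. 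The mismatch is an off-by-one slip in the theorem's wording (given the stated convention), not a flaw in your construction; the clean statement, and the one your proof establishes, is that both sets correspond to partitions of $n+1$. Say that explicitly rather than forcing agreement with the printed index set.
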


An \emph{$r$-partition} is a sequence $\lambda = (\lambda^1, \dots,
\lambda^r)$ of $r$ partitions $\lambda^1, \dots, \lambda^r$.  We say that
$\lambda$ is an $r$-partition of the integer $n$, and write $\lambda
\rpartitions{r} n$, if $\lambda^1 \cup \dots \cup \lambda^r \partitions n$.
We call $\lambda$ a \emph{double partition} if $r = 2$, and a \emph{triple
  partition} if $r = 3$.

The Coxeter group $W(B_n)$ acts faithfully as a group of signed permutations
on the set of long roots $\{\pm e_i \mid i = 1, \dots, n\}$, permuting the
lines $\Span{e_i}$, $i = 1, \dots, n$.  A cycle of $w$ in $W(B_n)$ is either
\emph{positive} or \emph{negative}, depending on whether the number of
positive roots $e_i$ with $\Span{e_i}$ in the cycle that are mapped to 
negative roots is even or odd.  The \emph{cycle type} of $w$ in $W(B_n)$ is a
double partition $\lambda = (\lambda^1, \lambda^2)$ of $n$, where
$\lambda^1$ records the lengths of the positive cycles of $w$ and
$\lambda^2$ records the lengths of the negative cycles. Again, two elements
of $W(B_n)$ are conjugate if and only if they have the same cycle type and,
in this way, the double partitions of $n$ naturally parametrize the conjugacy
classes of $W(B_n)$.

According to \cite[Prop.~2.3.10]{GeckPfeiffer2000}, the parabolic
subgroups of $W(B_n)$ are of the form $W(B_{n-m}) \times \prod_i
W(A_{\lambda_i-1})$, one conjugacy class for each partition
$\lambda \partitions m$, $0 \leq m \leq n$.  By Borel--De Siebenthal,
the maximal rank reflection subgroups of $W(B_n)$ are of type $\prod_i
W(B_{\lambda^1_i}) \times \prod_i W(D_{\lambda^2_i})$, one class for
each double partition $\lambda \partitions^2 n$ with $\lambda^2 > 1$
(or $\lambda^2 = \emptyset$).  It follows that the reflection
subgroups of $W(B_n)$ are direct products of Coxeter groups of types
$A$, $B$ and $D$, and their classes are naturally labeled by triple
partitions of~$n$.


\begin{Theorem}
  \label{thm:b}
  Let $W$ be a Coxeter group of type $B_n$, $n \ge 2$.  Then the conjugacy classes of
  reflection subgroups of $W$ are represented by
  \begin{align*}
    \{W_{\lambda} \mid \lambda \rpartitions{3} n,\, \lambda^3 > 1\},
  \end{align*}
  where $W_{\lambda} = \prod_i W(A_{\lambda^1_i-1}) \times \prod_i
  W(B_{\lambda^2_i}) \times \prod_i W(D_{\lambda^3_i})$.  The parabolic
  closure of $W_{\lambda}$ has type $W(B_{n-m}) \times \prod_i
  W(A_{\lambda^1_i-1})$, where $\lambda^1 \partitions m$. The Coxeter
  elements of $W_{\lambda}$ have cycle type $(\lambda^1, \lambda^2 \cup
  (\lambda^3-1) \cup 1^{\ell(\lambda^3)})$.  In particular, the map $\gamma
  \colon \RR \to \CC$ is surjective, but not injective.
\end{Theorem}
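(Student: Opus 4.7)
The plan is to prove the four assertions of the theorem (classification, parabolic closure, Coxeter cycle type, surjective-not-injective) in order, using Lemma~\ref{lemma:rank} together with the classifications recalled in \S\ref{sec:mick} as the backbone.

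First, for the classification, I would apply Lemma~\ref{lemma:rank}: any reflection subgroup $R$ of $W(B_n)$ is a maximal rank reflection subgroup of its parabolic closure. By Proposition~2.3.10 of \cite{GeckPfeiffer2000}, parabolic subgroups of $W(B_n)$ fall into one conjugacy class for each partition $\lambda^1 \vdash m$ with $0 \le m \le n$, with representative $W(B_{n-m}) \times \prod_i W(A_{\lambda^1_i-1})$. By Borel--De Siebenthal as recalled just before the theorem, the maximal rank reflection subgroups of the $W(B_{n-m})$ factor are $\prod_i W(B_{\lambda^2_i}) \times \prod_j W(D_{\lambda^3_j})$ with $\lambda^2 \cup \lambda^3 \vdash n-m$ and $\lambda^3 > 1$. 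Combining these gives the candidate list indexed by triple partitions.

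For the parabolic closure, I would work in the signed permutation representation. Fix disjoint subsets $S_i$, $T_j$, $U_k$ of $\{1,\dots,n\}$ on which the $W(A)$, $W(B)$, $W(D)$ blocks act. Each $W(B_{|T_j|})$ and, since $|U_k|\ge 2$, each $W(D_{|U_k|})$ has trivial fixed space on its block, while $W(A_{|S_i|-1})$ fixes the line $\mathbb{R}\mathbf{1}_{S_i}$. Hence $\Fix(W_\lambda) = \bigoplus_i \mathbb{R}\mathbf{1}_{S_i}$, and a direct check shows that a signed permutation fixes every $\mathbf{1}_{S_i}$ if and only if it preserves each $S_i$ setwise with trivial signs on $S_i$. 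Thus $\cl(W_\lambda) = \prod_i W(A_{\lambda^1_i-1}) \times W(B_{n-m})$, which also shows that distinct triple partitions yield reflection subgroups with distinct $\lambda^1$ invariant in their parabolic closure; together with Dynkin's theorem applied inside $W(B_{n-m})$ this confirms that no two triples in the list are conjugate.

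For the Coxeter element computation, I would compute the product of Coxeter elements of the direct factors using the standard cycle types: a Coxeter element of $W(A_{k-1})$ is a positive $k$-cycle, of $W(B_k)$ is a single negative $k$-cycle, and of $W(D_k)$ is the product of a negative $(k-1)$-cycle and a negative $1$-cycle (the latter is easily pinned down by the Coxeter number $2(k-1)$ and the even-negative-cycle condition). Concatenation gives the asserted cycle type $(\lambda^1,\, \lambda^2 \cup (\lambda^3-1) \cup 1^{\ell(\lambda^3)})$.

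Finally, surjectivity of $\gamma$ is immediate: any double partition $(\mu^1, \mu^2) \vdash^2 n$ is realised by the triple $(\mu^1, \mu^2, \emptyset)$, for which the constraint $\lambda^3 > 1$ holds vacuously. Non-injectivity is witnessed, for every $n \ge 2$, by the two triples $(\emptyset, (n-1,1), \emptyset)$ and $(\emptyset, \emptyset, (n))$: the corresponding reflection subgroups $W(B_{n-1}) \times W(B_1)$ and $W(D_n)$ are non-conjugate (for $n \ge 3$ they have different parabolic closures; for $n = 2$ they are distinct reflection subgroups of $W(B_2)$ generated by short versus long roots), yet both yield Coxeter cycle type $(\emptyset,(n-1,1))$. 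The main obstacle is the bookkeeping in the classification step, specifically verifying that the map from triple partitions to conjugacy classes is injective; this rests on the absence of type-$D_{2m}$-style splittings of parabolic classes in $W(B_n)$ combined with Dynkin's equivalence theorem, both already recorded in \S\ref{sec:mick}.
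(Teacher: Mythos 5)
Your route is the same as the paper's: the paper's proof of Theorem~\ref{thm:b} consists precisely of the discussion preceding it, i.e.\ the reduction via Lemma~\ref{lemma:rank} to (i) the classification of parabolic subgroups from \cite[Prop.~2.3.10]{GeckPfeiffer2000} and (ii) the Borel--De Siebenthal classification of maximal rank subgroups of the $B$-factor, after which the parabolic closure, the cycle types of Coxeter elements, and the (non-)injectivity statements are read off in the signed permutation model. Your added details (the computation of $\Fix(W_\lambda)$ and of its pointwise stabilizer, and the cycle types of Coxeter elements of the $A$-, $B$- and $D$-factors, namely a positive $k$-cycle, a negative $k$-cycle, and a negative $(k-1)$-cycle times a negative $1$-cycle) are all correct.

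One step is wrong as written. In the non-injectivity argument you assert that for $n\ge 3$ the subgroups $W(B_{n-1})\times W(B_1)$ and $W(D_n)$ have different parabolic closures. They do not: both have trivial fixed space in $E$, so both have parabolic closure $W(B_n)$ itself. Indeed your own closure formula (and the one in the theorem), applied to the triples $(\emptyset,(n-1,1),\emptyset)$ and $(\emptyset,\emptyset,(n))$, which both have $\lambda^1=\emptyset$, gives $\cl(W_\lambda)=W(B_n)$ in both cases, so the parenthetical claim contradicts the very statement being proved. The conclusion you need, non-conjugacy, is still immediate: for $n\ge 3$ the two subgroups have different orders, $2^{n}(n-1)!$ versus $2^{n-1}n!$, and for every $n\ge 2$ the first contains reflections in short roots while the second contains only reflections in long roots, and conjugation in $W(B_n)$ preserves root lengths; alternatively it follows from the classification you have already established, since the two triples are distinct. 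A related caution on that classification step: the length-preservation argument, not ``isomorphic implies $W$-equivalent,'' is what actually separates, say, $W(B_1)^2\times W(B_{n-2})$ from $W(D_2)\times W(B_{n-2})$, which are isomorphic as Coxeter groups (both of type $A_1^2\times B_{n-2}$) yet not conjugate; so your appeal to ``Dynkin's equivalence theorem inside $W(B_{n-m})$'' must be understood for length-decorated Dynkin diagrams rather than Coxeter graphs, exactly as in the paper's distinction between subsystems of $\Phi$ and of $\tilde\Phi$.
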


We illustrate the classification in type $B_n$ in Table \ref{tab:b5} below, 
where
we list all conjugacy classes of reflection subgroups of $W(B_5)$ according
to Theorem \ref{thm:b}.  Clearly, it follows from the data in Table
\ref{tab:b5} that $\gamma$ is not injective in this case.
 
The Coxeter group $W(D_n)$ is a normal subgroup of index $2$ in $W(B_n)$,
and as such it is a union of $W(B_n)$-conjugacy classes of elements.  In
fact, the class of elements of cycle type $\lambda = (\lambda^1, \lambda^2)$
is contained in $W(D_n)$ if and only if $\ell(\lambda^2)$ is even, and it is
a single conjugacy class in $W(D_n)$, unless $\lambda^2 = \emptyset$ and
$\lambda^1$ is even.  In the latter case, the $W(B_n)$-class splits into two
$W(D_n)$-classes, labelled $(\lambda^1, +)$ and $(\lambda^1,-)$.  In this
way, the conjugacy classes of $W(D_n)$ are parametrized by certain 
double partitions
of $n$.

According to \cite[Prop.~2.3.13]{GeckPfeiffer2000}, $W(D_n)$ has three
distinct kinds of parabolic subgroups: one class of subgroups of type
$W(D_{n-m}) \times \prod_i W(A_{\lambda_i-1})$ for each partition
$\lambda \partitions m$, $0 \leq m \leq n - 2$, two classes of subgroups of
type $\prod_i W(A_{\lambda_i-1})$ for each even partition
$\lambda \partitions n$, and one class of subgroups of type $\prod_i
W(A_{\lambda_i-1})$ for each non-even partition $\lambda \partitions n$.
By Borel--De Siebenthal, the maximal rank reflection
subgroups of $W(D_n)$ are of type $\prod_i W(D_{\lambda_i})$, one class for
each partition $\lambda \partitions n$ with $\lambda > 1$.
It follows that
reflection subgroups of $W(D_n)$ are direct products of Coxeter groups of
types $A$ and $D$, and their classes are naturally labeled by double
partitions of~$n$.
 This yields the
following classification of the conjugacy classes of reflection subgroups of
$W(D_n)$, in terms of double partitions of $n$.

\begin{Theorem}
\label{thm:d}
Let $W$ be a Coxeter group of type $D_n$, $n \geq 4$.  Then the conjugacy
classes of reflection subgroups of $W$ are represented by
\begin{align*}
  \{W_{\lambda} \mid \lambda \rpartitions{2} n,\, \lambda^2 > 1\}
\end{align*}
if $n$ is odd, and by
\begin{align*}
  \{W_{\lambda} \mid \lambda \rpartitions{2} n,\, \lambda^2 > 1 \text{ and }
  \lambda^1 \text{ non-even in case } \lambda^2 = \emptyset\} \cup
  \{W_{\lambda}^{\pm} \mid \lambda \partitions n \text{ and } \lambda \text{ even}\}
\end{align*}
if $n$ is even, where $W_{\lambda} = \prod_i W(A_{\lambda^1_i-1}) \times
\prod_i W(D_{\lambda^2_i})$ and $W_{\lambda}^{\epsilon} = \prod_i
W(A_{\lambda_i-1})$, where $\epsilon = \pm$.
The parabolic closure of $W_{\lambda}$ has type
$W(D_{n-m}) \times \prod_i W(A_{\lambda^1_i-1})$, where
$\lambda^1 \partitions m$; the parabolic closure of $W_{\lambda}^{\epsilon}$
is $W_{\lambda}^{\epsilon}$ itself.  The Coxeter elements of $W_{\lambda}$
have cycle type $(\lambda^1, (\lambda^2-1) \cup 1^{\ell(\lambda^2)})$; the
Coxeter elements of $W_{\lambda}^{\epsilon}$ have cycle type $(\lambda,
\epsilon)$.  In particular, the map $\gamma \colon \RR \to \CC$ is
injective, but not surjective.
\end{Theorem}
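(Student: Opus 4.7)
My approach parallels the proof of Theorem \ref{thm:b}: I combine the recalled description of the parabolic subgroups of $W(D_n)$ with the Borel--De Siebenthal classification of maximal-rank reflection subgroups, and then compute cycle types of Coxeter elements factor by factor. By Lemma \ref{lemma:rank}, every reflection subgroup $R$ of $W(D_n)$ is a maximal-rank reflection subgroup of its parabolic closure $\cl(R)$, which has type $W(D_{n-m}) \times \prod_i W(A_{\lambda^1_i-1})$ for some $\lambda^1 \partitions m$. Applying Borel--De Siebenthal to the $W(D_{n-m})$ factor replaces it by $\prod_i W(D_{\lambda^2_i})$ for any $\lambda^2 \partitions (n-m)$ with $\lambda^2 > 1$, producing all candidate subgroups $W_\lambda$; the case $\lambda^2 = \emptyset$ recovers the $A$-type parabolics, where the splitting in type $D_{2m}$ produces the extra classes $W_\lambda^\pm$ exactly when $n$ is even and $\lambda^1$ is even. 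It remains to check that two $W_\lambda$'s with the same $\lambda$ (and $\lambda^2 \neq \emptyset$) are $W(D_n)$-conjugate: this reduces to showing a single orbit inside a fixed parabolic closure $P$, which follows from Borel--De Siebenthal applied inside the $W(D_{n-m})$ factor of $P$.

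The parabolic closure of $W_\lambda$ (with $\lambda^2 \neq \emptyset$) is exactly $W(D_{n-m}) \times \prod_i W(A_{\lambda^1_i-1})$, since each $W(D_{\lambda^2_i})$ has trivial fixed space on its natural $\lambda^2_i$-dimensional summand, so $\Fix(W_\lambda)$ coincides with the fixed space of the surrounding parabolic; the subgroups $W_\lambda^\epsilon$ are themselves parabolic. For the cycle types, a Coxeter element of $W(A_{k-1}) \cong \Sym_k$ is a positive $k$-cycle, and a direct computation in the standard signed-permutation realization shows that a Coxeter element of $W(D_k)$ has cycle type $(\emptyset,(k-1,1))$, that is, one negative $(k-1)$-cycle together with one negative fixed point. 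Multiplying across all factors yields the asserted cycle type $(\lambda^1,(\lambda^2-1)\cup 1^{\ell(\lambda^2)})$; for $W_\lambda^\epsilon$, the Coxeter element is a product of positive cycles of lengths $\lambda_i$, and the standard parametrization of the two $W(D_{2m})$-classes of $A$-type parabolics matches $W_\lambda^+$ with the class $(\lambda,+)$ and $W_\lambda^-$ with $(\lambda,-)$.

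Injectivity of $\gamma$ then follows by inverting the cycle-type formula. Using the observation preceding Lemma \ref{la:7} that conjugate Coxeter elements force conjugate parabolic closures, we recover $\lambda^1$; from the negative-cycle partition $\mu = (\lambda^2 - 1) \cup 1^{\ell(\lambda^2)}$ one reads $\ell(\lambda^2) = \ell(\mu)/2$, and then each part of $\lambda^2$ from the multiplicities of parts in $\mu$, while the sign $\epsilon$ is part of the conjugacy datum itself. Non-surjectivity follows because the negative part $\mu$ of any such cycle type must contain at least $\ell(\mu)/2$ parts equal to $1$; for instance, the class of cycle type $(\emptyset,(2,2))$ in $W(D_4)$ has $\ell(\mu)/2 = 1$ but no $1$-parts, so it is not in the image of $\gamma$. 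The main obstacle I anticipate is the careful bookkeeping of the $\pm$ conventions: both the parabolic subgroups of $A$-type in $W(D_{2m})$ and the conjugacy classes of signed permutations with purely positive cycle type split into $\pm$ pairs, and showing that $W_\lambda^+$ has Coxeter elements in the class $(\lambda,+)$ (and likewise for $-$) requires tracing through explicit representatives in a common standard realization of both splittings.
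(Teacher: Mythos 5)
Your proposal is correct and follows essentially the same route as the paper: reduce to the parabolic closure via Lemma \ref{lemma:rank} and the quoted classification of parabolic subgroups of $W(D_n)$, apply Borel--De Siebenthal inside the $W(D_{n-m})$ factor to get the candidates $W_\lambda$ and $W_\lambda^{\pm}$, and then compute Coxeter-element cycle types factor by factor. Your added details---inverting the cycle-type formula $\mu=(\lambda^2-1)\cup 1^{\ell(\lambda^2)}$ for injectivity (which also yields non-conjugacy of distinct labels) and exhibiting a class such as $(\emptyset,(2,2))$ outside the image for non-surjectivity---just make explicit what the paper leaves implicit.
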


We illustrate the classification in type $D_n$ from Theorem \ref{thm:d} for
$n = 6$ in Table \ref{tab:d6} below.

\section{The exceptional Weyl groups}
\label{sec:keith}
For the exceptional Weyl groups all results are obtained by following the
recursive procedure outlined in \S\ref{sec:mick}, using the Borel--De
Siebenthal algorithm for the various factors of each standard parabolic
subgroup of $W$. The calculations were carried out with the use of {\sf
  GAP}~\cite{GAP} and {\sf CHEVIE}~\cite{chevie}. Here the conjugacy classes
of the elements in $W$ are labeled as in Carter~\cite{Carter1972}.

In Tables \ref{tab:e6} - \ref{tab:g2} we list all reflection subgroups in
case $W$ is of exceptional type up to conjugacy.  In the cases when $W$ has
only a single class of reflections, it is readily checked that $\gamma$ is
injective, as required for Theorem \ref{thm:main}.

Table \ref{tab:e8} contains the results for $W(E_8)$. Here the two maximal
rank reflection subgroups of types $A_1 A_7$ and $A_3 D_5$ have Coxeter
elements that are conjugate in $W$.  Hence $\gamma$ is not injective.

In Tables \ref{tab:f4} and \ref{tab:g2} we list all conjugacy classes of
reflection subgroups of $W(F_4)$ and $W(G_2)$, respectively. In both
instances we see that $\gamma$ is not injective.

\section{The non-crystallographic cases}
\label{sec:brian}
As in the exceptional cases, the non-crystallographic instances were
computed using {\sf GAP}~\cite{GAP} and {\sf CHEVIE}~\cite{chevie}. The
Borel--De Siebenthal algorithm does not apply, but these groups are
sufficiently small that all the relevant information can be calculated
directly.

In Tables \ref{tab:h3} and \ref{tab:h4} we list all conjugacy classes of
reflection subgroups of $W(H_3)$ and $W(H_4)$, respectively. Here we see
that $\gamma$ is injective in both cases.  The labeling of the conjugacy
classes is the one used by {\sf CHEVIE}.

The reflection subgroups of the dihedral group $W(I_2(m))$ can be described
as follows.

\begin{Theorem}\label{thm:i2}
  Let $W$ be of type $I_2(m)$, $m = 5$  or $m >6$.

  \begin{enumerate}
  \item If $m$ is odd then the classes of reflection subgroups of $W$ are of
    types $\emptyset$, $A_1$, and $I_2(d)$ where $d > 1$ is a divisor of
    $m$.  The map $\gamma$ is injective, but not surjective.
  \item If $m$ is even then the classes of reflection subgroups of $W$ are
    of types $\emptyset$, $A_1$, $\tilde{A}_1$, $I_2(d)$ where $d > 1$ is a
    divisor of $m$ and $\tilde{I}_2(d)$ where $2d > 2$ is a divisor of
    $m$. The map $\gamma$ is neither injective nor surjective.
  \end{enumerate}
\end{Theorem}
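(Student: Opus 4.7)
Proof proposal. The plan is to stratify the reflection subgroups of $W = W(I_2(m))$ by rank (which must be $0$, $1$, or $2$) and then carry out a direct combinatorial analysis via the standard labeling of reflections by indices modulo $m$. Fix a generator $\rho$ of the cyclic rotation subgroup $C_m \leq W$ and label the reflections $r_0, r_1, \dots, r_{m-1}$ so that $r_a r_b = \rho^{a-b}$; this yields the conjugation formulas $\rho^\ell r_a \rho^{-\ell} = r_{a+2\ell}$ and $r_b r_a r_b = r_{2b-a}$. The rank-$0$ subgroup is trivial, and the rank-$1$ reflection subgroups correspond bijectively to conjugacy classes of reflections in $W$: a single class $A_1$ when $m$ is odd, and the two classes $A_1$, $\tilde A_1$ (distinguished by the parity of the reflection's index) when $m$ is even.

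For rank $2$, I use that any dihedral reflection subgroup $R$ has rotation subgroup $R \cap C_m = C_d$ for some divisor $d > 1$ of $m$, so $R = H_a := C_d \cup C_d r_a$ for some $a$, with reflection set $\{r_{a + k(m/d)} : 0 \leq k < d\}$; there are exactly $m/d$ such subgroups, parametrized by $a \in \mathbb{Z}/(m/d)$. The key step is the conjugacy analysis: the formulas above give $\rho^\ell H_a \rho^{-\ell} = H_{a+2\ell}$ and $r_b H_a r_b = H_{2b-a}$ (read modulo $m/d$), so the $W$-orbit of $a$ is $(\pm a + 2\mathbb{Z}) \pmod{m/d}$. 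When $m/d$ is odd the shifts by $2$ already span $\mathbb{Z}/(m/d)$, yielding a single class $I_2(d)$; when $m/d$ is even the orbit preserves the parity of $a \pmod{m/d}$, giving two conjugacy classes, which I label $I_2(d)$ and $\tilde I_2(d)$ according to whether the reflections of $H_a$ lie in the $A_1$- or the $\tilde A_1$-class of $W$. For $m$ odd, $m/d$ is always odd and one gets a single $I_2(d)$ for each divisor $d > 1$ of $m$; for $m$ even, $m/d$ is even exactly when $d \mid m/2$, producing the extra classes $\tilde I_2(d)$ listed in the statement.

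Finally, the Coxeter element of $H_a$ is $r_{a+m/d}\, r_a = \rho^{m/d}$, whose $W$-conjugacy class $\{\rho^{\pm m/d}\}$ depends only on $d$. When both $I_2(d)$ and $\tilde I_2(d)$ exist they share this class, giving non-injectivity of $\gamma$ in the even case; in the odd case the Coxeter classes of the identity, a reflection, and $\rho^{m/d}$ for distinct $d > 1$ have pairwise distinct orders, so $\gamma$ is injective. Surjectivity fails in both cases: the image meets only one of the $\varphi(d)/2$ conjugacy classes of order-$d$ rotations in $W$, and the hypothesis $m = 5$ or $m > 6$ forces $\varphi(m) \geq 4$, so some rotation class is missed. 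The main technical point is the second paragraph---verifying that when $m/d$ is even both rotation- and reflection-conjugation preserve the parity of $a \pmod{m/d}$, and that the two resulting orbits match the $A_1$- vs.\ $\tilde A_1$-dichotomy for the reflections they contain; once this is in hand, injectivity and surjectivity of $\gamma$ reduce to a count of orders and divisors.
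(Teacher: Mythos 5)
Your proposal is correct, and it takes the same route as the paper: the paper's proof consists precisely of the remark that the subgroups of a dihedral group are determined by a straightforward computation and that one then filters out those generated by reflections, which is exactly the computation you carry out. Your write-up simply supplies the details the paper leaves implicit -- the parametrization of the dihedral subgroups $H_a$ by divisors $d$ of $m$ and residues $a \bmod m/d$, the conjugacy analysis via $a \mapsto \pm a + 2\mathbb{Z}$, and the identification of Coxeter classes $\{\rho^{\pm m/d}\}$ yielding the stated behaviour of $\gamma$ -- and these details check out.
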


The subgroups of a dihedral group are determined by a
straightforward computation. The theorem follows by filtering
out those subgroups that are generated by reflections.

\section{Tables}

In Tables \ref{tab:b5} - \ref{tab:h4} we present the classification of the
reflection subgroups of $W$ in various cases. The tables provide the
following information. In the first column of each table we list the types
of the reflection subgroups $R$ of $W$. In the second column in Tables
\ref{tab:b5} and \ref{tab:d6} we also give the partition representing $R$
according to Theorems \ref{thm:b} and \ref{thm:d}, respectively.  The next
two columns give the cardinality of $R$ and the cardinality of the class
$[R]$ of $R$ (that is, $|W:N_W(R)|$). Finally, in the last column we list the
image of $\gamma$, i.e.\ the class $[c]$ of a Coxeter element $c$ of $R$ in
$W$. For the classical types, conjugacy classes are labeled by cycle
type. For the exceptional types, conjugacy classes are labeled as in
Carter's classification~\cite{Carter1972}.

Conjugacy classes of reflection subgroups with distinct parabolic closures
are separated by horizontal lines. For a given parabolic subgroup $P$ of
$W$, the row for $P$ is preceded by a horizontal line and followed by the
rows for reflection subgroups $R$ of $W$ with $\cl(R) = P$.

\begin{table}[p] 
  \bigskip
  \extrarowheight2pt
  \caption{Reflection subgroups of $W(B_5)$.}
  \label{tab:b5}
  \begin{tabular}[t]{ccrrc}\toprule 
    Type of $R$ & $\lambda$ & $\Size{R}$ & $\Size{[R]}$ & Class
    \\[5pt] \toprule
    $\emptyset$ & $1^5..$& 1 & 1 & $1^5.$\\
    \noalign{\vglue 2pt} \hline
    $B_1$ & $1^4.1.$ & 2 & 5 & $1^4.1$\\
    \noalign{\vglue 2pt} \hline
    $A_1$ & $21^3..$ & 2 & 20 & $21^3.$\\
    \noalign{\vglue 2pt} \hline
    $B_1 A_1$ & $21^2.1.$ & 4 & 60 & $21^2.1$\\
    \noalign{\vglue 2pt} \hline
    $A_1^{2}$ & $2^21..$ & 4 & 60 & $2^21.$\\
    \noalign{\vglue 2pt} \hline
    $A_2$ & $31^2..$ & 6 & 40 & $31^2.$\\
    \noalign{\vglue 2pt} \hline
    $B_2$ & $1^3.2.$ & 8 & 10 & $1^3.2$\\
    $B_1^{2}$ & $1^3.1^2.$ & 4 & 10 & $1^3.1^2$\\
    $D_2$ & $1^3..2$ & 4 & 10 & $1^3.1^2$\\
    \noalign{\vglue 2pt} \hline
    $B_1 A_1^{2}$ & $2^2.1.$ & 8 & 60 & $2^2.1$\\
    \noalign{\vglue 2pt} \hline
    $B_1 A_2$ & $31.1.$ & 12 & 80 & $31.1$\\
    \noalign{\vglue 2pt} \hline
    $A_1 A_2$ & $32..$ & 12 & 80 & $32.$\\
    \noalign{\vglue 2pt} \hline
    $B_2 A_1$ & $21.2.$ & 16 & 60 & $21.2$\\
    $B_1^{2} A_1$ & $21.1^2.$ & 8 & 60 & $21.1^2$\\
    $D_2 A_1$ & $21..2$ & 8 & 60 & $21.1^2$\\
    \noalign{\vglue 2pt} \hline
    $A_3$ & $41..$ & 24 & 40 & $41.$\\
    \noalign{\vglue 2pt} \hline
    $B_3$ & $1^2.3. $ & 48 & 10 & $1^2.3$\\
    $B_1 B_2$ & $1^2.21.$ & 16 & 30 & $1^2.21$\\
    $D_3$ & $1^2..3$ & 24 & 10 & $1^2.21$\\
    $D_2 B_1$ & $1^2.1.2$ & 8 & 30 & $1^2.1^3$\\
    $B_1^{3}$ & $1^2.1^3.$ & 8 & 10 & $1^2.1^3$\\
    \noalign{\vglue 2pt} \hline
    $B_2 A_2$ & $3.2.$ & 48 & 40 & $3.2$\\
    $B_1^{2} A_2$ & $3.1^2.$ & 24 & 40 & $3.1^2$\\
    $D_2 A_2$ & $3..2$ & 24 & 40 & $3.1^2$\\
    \noalign{\vglue 2pt} \hline
    $B_1 A_3$ & $4.1.$ & 48 & 40 & $4.1$\\
    \noalign{\vglue 2pt} \hline
    $B_3 A_1$ & $2.3.$ & 96 & 20 & $2.3$\\
    $B_1 B_2 A_1$ & $2.21.$ & 32 & 60 & $2.21$\\
    $D_3 A_1$ & $2..3$ & 48 & 20 & $2.21$\\
    $D_2 B_1 A_1$ & $2.1.2$ & 16 & 60 & $2.1^3$\\
    $B_1^{3} A_1$ & $2.1^3.$ & 16 & 20 & $2.1^3$\\
    \bottomrule
  \end{tabular}
  \qquad
  \begin{tabular}[t]{ccrrc}\toprule
    Type of $R$ & $\lambda$ & $\Size{R}$ & $\Size{[R]}$ & Class
    \\[5pt] \toprule
    $A_4$ & $5..$ & 120 & 16 & $5.$\\
    \noalign{\vglue 2pt} \hline
    $B_4$ & $1.4.$ & 384 & 5 & $1.4$\\
    $B_1 B_3$ & $1.31.$ & 96 & 20 & $1.31$\\
    $B_2^{2}$ & $1.2^2.$ & 64 & 15 & $1.2^2$\\
    $D_4$ & $1..4$ & 192 & 5 & $1.31$\\
    $D_3 B_1$ & $1.1.3$ & 48 & 20 & $1.21^2$\\
    $D_2 B_2$ & $1.2.2$ & 32 & 30 & $1.21^2$\\
    $B_1^{2} B_2$ & $1.21^2.$ & 32 & 30 & $1.21^2$\\
    $D_2 B_1^{2}$ & $1.1^2.2$ & 16 & 30 & $1.1^4$\\
    $B_1^{4}$ & $1.1^4.$ & 16 & 5 & $1.1^4$\\
    $D_2^{2}$ & $1..2^2$ & 16 & 15 & $1.1^4$\\
    \noalign{\vglue 2pt} \hline
    $B_5$ & $.5.$ & 3840 & 1 & $.5$\\
    $B_1 B_4$ & $.41.$ & 768 & 5 & $.41$\\
    $B_2 B_3$ & $.32.$ & 384 & 10 & $.32$\\
    $D_5$ & $..5$ & 1920 & 1 & $.41$\\
    $D_4 B_1$ & $.1.4$ & 384 & 5 & $.31^2$\\
    $D_3 B_2$ & $.2.3$ & 192 & 10 & $.2^21$\\
    $D_2 B_3$ & $.3.2$ & 192 & 10 & $.31^2$\\
    $B_1^{2} B_3$ & $.31^2.$ & 192 & 10 & $.31^2$\\
    $B_1 B_2^{2}$ & $.2^21.$ & 128 & 15 & $.2^21$\\
    $D_3 B_1^{2}$ & $.1^2.3$ & 96 & 10 & $.21^3$\\
    $D_2 B_1 B_2$ & $.21.2$ & 64 & 30 & $.21^3$\\
    $B_1^{3} B_2$ & $.21^3.$ & 64 & 10 & $.21^3$\\
    $D_2 B_1^{3}$ & $.1^3.2$ & 32 & 10 & $.1^5$\\
    $D_2^{2} B_1$ & $.1.2^2$ & 32 & 15 & $.1^5$\\
    $D_2 D_3$ & $..23$ & 96 & 10 & $.21^3$\\
    $B_1^{5}$ & $.1^5.$ & 32 & 1 & $.1^5$\\
    \bottomrule
  \end{tabular}
  \bigskip
\end{table}

\begin{table}[p]
  \bigskip
  \small
  \extrarowheight1pt
  \caption{Reflection subgroups of $W(D_6)$.}
  \label{tab:d6}
  \begin{tabular}[t]{ccrrc}\toprule
    Type of $R$ & $\lambda$ & $\Size{R}$ & $\Size{[R]}$ & Class
    \\[4pt] \toprule
    $\emptyset$ & $1^6.$ & 1 & 1 & $1^6.$\\
    \noalign{\vglue 1pt} \hline
    $A_1$ & $21^4.$ & 2 & 30 & $21^4.$\\
    \noalign{\vglue 1pt} \hline
    $D_2$ & $1^4.2$ & 4 & 15 & $1^4.1^2$\\
    \noalign{\vglue 1pt} \hline
    $A_1^{2}$ & $2^21^2.$ & 4 & 180 & $2^21^2.$\\
    \noalign{\vglue 1pt} \hline
    $A_2$ & $31^3.$ & 6 & 80 & $31^3.$\\
    \noalign{\vglue 1pt} \hline
    $A_1^{3}$ & $2^3.+$ & 8 & 60 & $2^3.+$\\
    \noalign{\vglue 1pt} \hline
    $A_1^{3}$ & $2^3.-$ & 8 & 60 & $2^3.-$\\
    \noalign{\vglue 1pt} \hline
    $D_2 A_1$ & $21^2.2$ & 8 & 180 & $21^2.1^2$\\
    \noalign{\vglue 1pt} \hline
    $A_1 A_2$ & $321.$ & 12 & 480 & $321.$\\
    \noalign{\vglue 1pt} \hline
    $D_3$ & $1^3.3$ & 24 & 20 & $1^3.21$\\
    \noalign{\vglue 1pt} \hline
    $A_3$ & $41^2.$ & 24 & 120 & $41^2.$\\
    \noalign{\vglue 1pt} \hline
    $D_2 A_1^{2}$ & $2^2.2$ & 16 & 180 & $2^2.1^2$\\
    \noalign{\vglue 1pt} \hline
    $D_2 A_2$ & $31.2$ & 24 & 240 & $31.1^2$\\
    \noalign{\vglue 1pt} \hline
    $A_2^{2}$ & $33.$ & 36 & 160 & $33.$\\
    \noalign{\vglue 1pt} \hline
    $D_3 A_1$ & $21.3$ & 48 & 120 & $21.21$\\
    \noalign{\vglue 1pt} \hline
    $A_1 A_3$ & $42.+$ & 48 & 120 & $42.+$\\
    \noalign{\vglue 1pt} \hline
    $A_1 A_3$ & $42.-$ & 48 & 120 & $42.-$\\
    \bottomrule
  \end{tabular}
  \qquad
  \begin{tabular}[t]{ccrrc}\toprule
    Type of $R$ & $\lambda$ & $\Size{R}$ & $\Size{[R]}$ & Class
    \\[4pt]\toprule
    $A_4$ & $51.$ & 120 & 96 & $51.$\\
    \noalign{\vglue 1pt} \hline
    $D_4$ & $1^2.4$ & 192 & 15 & $1^2.31$\\
    $D_2^{2}$ & $1^2.2^2$ & 16 & 45 & $1^2.1^4$\\
    \noalign{\vglue 1pt} \hline
    $D_2 A_3$ & $4.2$ & 96 & 120 & $4.1^2$\\
    \noalign{\vglue 1pt} \hline
    $D_3 A_2$ & $3.3$ & 144 & 80 & $3.21$\\
    \noalign{\vglue 1pt} \hline
    $D_4 A_1$ & $2.4$ & 384 & 30 & $2.31$\\
    $D_2^2 A_1$ & $2.2^2$ & 32 & 90 & $2.1^4$\\
    \noalign{\vglue 1pt} \hline
    $A_5$ & $6.+$ & 720 & 16 & $6.+$\\
    \noalign{\vglue 1pt} \hline
    $A_5$ & $6.-$ & 720 & 16 & $6.-$\\
    \noalign{\vglue 1pt} \hline
    $D_5$ & $1.5$ & 1920 & 6 & $1.41$\\
    $D_2 D_3$ & $1.32$ & 96 & 60 & $1.21^3$\\
    \noalign{\vglue 1pt} \hline
    $D_6$ & $.6$ & 23040 & 1 & $.51$\\
    $D_2 D_4$ & $.42$ & 768 & 15 & $.31^3$\\
    $D_3^{2}$ & $.3^2$ & 576 & 10 & $.2^21^2$\\
    $D_2^{3}$ & $.2^3$ & 64 & 15 & $.1^6$\\
    \bottomrule
  \end{tabular}
  \bigskip
\end{table}

\begin{table}[p]
  \bigskip
  \small
  \extrarowheight1pt
  \caption{Reflection subgroups of $W(E_6)$.}
  \label{tab:e6}
  \begin{tabular}[t]{crrc}\toprule
    Type of $R$ & $\Size{R}$ & $\Size{[R]}$ & Class 
    \\[4pt]
 \toprule
    $\emptyset$ & 1 & 1 & $1$\\
    \noalign{\vglue 1pt} \hline
    $A_1$ & 2 & 36 & $A_1$\\
    \noalign{\vglue 1pt} \hline
    $A_1^{2}$ & 4 & 270 & $2A_1$\\
    \noalign{\vglue 1pt} \hline
    $A_2$ & 6 & 120 & $A_2$\\
    \noalign{\vglue 1pt} \hline
    $A_1^{3}$ & 8 & 540 & $3A_1$\\
    \noalign{\vglue 1pt} \hline
    $A_1 A_2$ & 12 & 720 & $A_2+A_1$\\
    \noalign{\vglue 1pt} \hline
    $A_3$ & 24 & 270 & $A_3$\\
    \noalign{\vglue 1pt} \hline
    $A_1^{2} A_2$ & 24 & 1080 & $A_2+2A_1$\\
    \noalign{\vglue 1pt} \hline
    $A_2^{2}$ & 36 & 120 & $2A_2$\\
    \noalign{\vglue 1pt} \hline
    $A_1 A_3$ & 48 & 540 & $A_3+A_1$\\
    \noalign{\vglue 1pt} \hline
    $A_4$ & 120 & 216 & $A_4$\\
    \bottomrule
  \end{tabular}
  \qquad
  \begin{tabular}[t]{crrc}\toprule
    Type of $R$ & $\Size{R}$ & $\Size{[R]}$ & Class 
    \\[4pt] \toprule
    $D_4$ & 192 & 45 & $D_4$\\
    $A_1^{4}$ & 16 & 135 & $4A_1$\\
    \noalign{\vglue 1pt} \hline
    $A_1 A_2^{2}$ & 72 & 360 & $2A_2+A_1$\\
    \noalign{\vglue 1pt} \hline
    $A_1 A_4$ & 240 & 216 & $A_4+A_1$\\
    \noalign{\vglue 1pt} \hline
    $A_5$ & 720 & 36 & $A_5$\\
    \noalign{\vglue 1pt} \hline
    $D_5$ & 1920 & 27 & $D_5$\\
    $A_1^{2} A_3$ & 96 & 270 & $A_3+2A_1$\\
    \noalign{\vglue 1pt} \hline
    $E_6$ & 51840 & 1 & $E_6$\\
    $A_1 A_5$ & 1440 & 36 & $A_5+A_1$\\
    $A_2^{3}$ & 216 & 40 & $3A_2$\\
    \bottomrule
  \end{tabular}
  \bigskip
\end{table}

\begin{table}[p]
  \bigskip
  \small
  \extrarowheight1pt
  \caption{Reflection subgroups of $W(E_7)$.}
  \label{tab:e7}
  \begin{tabular}[t]{crrc}\toprule
    Type of $R$ & $\Size{R}$ & $\Size{[R]}$ & Class 
    \\[4pt] \toprule
    $\emptyset$ & 1 & 1 & $1$\\
    \noalign{\vglue 1pt} \hline
    $A_1$ & 2 & 63 & $A_1$\\
    \noalign{\vglue 1pt} \hline
    $A_1^{2}$ & 4 & 945 & $2A_1$\\
    \noalign{\vglue 1pt} \hline
    $A_2$ & 6 & 336 & $A_2$\\
    \noalign{\vglue 1pt} \hline
    $A_1^{3}$ & 8 & 315 & $3A_1'$\\
    \noalign{\vglue 1pt} \hline
    $A_1^{3}$ & 8 & 3780 & $3A_1''$\\
    \noalign{\vglue 1pt} \hline
    $A_1 A_2$ & 12 & 5040 & $A_2+A_1$\\
    \noalign{\vglue 1pt} \hline
    $A_3$ & 24 & 1260 & $A_3$\\
    \noalign{\vglue 1pt} \hline
    $A_1^{4}$ & 16 & 3780 & $4A_1'$\\
    \noalign{\vglue 1pt} \hline
    $A_1^{2} A_2$ & 24 & 15120 & $A_2+2A_1$\\
    \noalign{\vglue 1pt} \hline
    $A_2^{2}$ & 36 & 3360 & $2A_2$\\
    \noalign{\vglue 1pt} \hline
    $A_1 A_3$ & 48 & 1260 & $A_3+A_1'$\\
    \noalign{\vglue 1pt} \hline
    $A_1 A_3$ & 48 & 7560 & $A_3+A_1''$\\
    \noalign{\vglue 1pt} \hline
    $A_4$ & 120 & 2016 & $A_4$\\
    \noalign{\vglue 1pt} \hline
    $D_4$ & 192 & 315 & $D_4$\\
    $A_1^{4}$ & 16 & 945 & $4A_1''$\\
    \noalign{\vglue 1pt} \hline
    $A_1^{3} A_2$ & 48 & 5040 & $A_2+3A_1$\\
    \noalign{\vglue 1pt} \hline
    $A_1 A_2^{2}$ & 72 & 10080 & $2A_2+A_1$\\
    \noalign{\vglue 1pt} \hline
    $A_1^{2} A_3$ & 96 & 7560 & $A_3+2A_1'$\\
    \noalign{\vglue 1pt} \hline
    $A_2 A_3$ & 144 & 5040 & $A_3+A_2$\\
    \noalign{\vglue 1pt} \hline
    $A_1 A_4$ & 240 & 6048 & $A_4+A_1$\\
    \noalign{\vglue 1pt} \hline
    $A_1 D_4$ & 384 & 945 & $D_4+A_1$\\
    $A_1^{5}$ & 32 & 2835 & $5A_1$\\
    \bottomrule
  \end{tabular}
  \qquad
  \begin{tabular}[t]{crrc}\toprule
    Type of $R$ & $\Size{R}$ & $\Size{[R]}$ & Class 
    \\[4pt] \toprule
    $A_5$ & 720 & 336 & $A_5'$\\
    \noalign{\vglue 1pt} \hline
    $A_5$ & 720 & 1008 & $A_5''$\\
    \noalign{\vglue 1pt} \hline
    $D_5$ & 1920 & 378 & $D_5$\\
    $A_1^{2} A_3$ & 96 & 3780 & $A_3+2A_1''$\\
    \noalign{\vglue 1pt} \hline
    $A_1 A_2 A_3$ & 288 & 5040 & $A_3+A_2+A_1$\\
    \noalign{\vglue 1pt} \hline
    $A_2 A_4$ & 720 & 2016 & $A_4+A_2$\\
    \noalign{\vglue 1pt} \hline
    $A_1 A_5$ & 1440 & 1008 & $A_5+A_1'$\\
    \noalign{\vglue 1pt} \hline
    $A_1 D_5$ & 3840 & 378 & $D_5+A_1$\\
    $A_1^{3} A_3$ & 192 & 3780 & $A_3+3A_1$\\
    \noalign{\vglue 1pt} \hline
    $A_6$ & 5040 & 288 & $A_6$\\
    \noalign{\vglue 1pt} \hline
    $D_6$ & 23040 & 63 & $D_6$\\
    $A_1^{2} D_4$ & 768 & 945 & $D_4+2A_1$\\
    $A_3^{2}$ & 576 & 630 & $D_4(a_1)+2A_1$\\
    $A_1^{6}$ & 64 & 945 & $6A_1$\\
    \noalign{\vglue 1pt} \hline
    $E_6$ & 51840 & 28 & $E_6$\\
    $A_1 A_5$ & 1440 & 1008 & $A_5+A_1''$\\
    $A_2^{3}$ & 216 & 1120 & $3A_2$\\
    \noalign{\vglue 1pt} \hline
    $E_7$ & 2903040 & 1 & $E_7$\\
    $A_1 D_6$ & 46080 & 63 & $D_6+A_1$\\
    $A_7$ & 40320 & 36 & $A_7$\\
    $A_2 A_5$ & 4320 & 336 & $A_5+A_2$\\
    $A_1 A_3^{2}$ & 1152 & 630 & $2A_3+A_1$\\
    $A_1^{3} D_4$ & 1536 & 315 & $D_4+3A_1$\\
    $A_1^{7}$ & 128 & 135 & $7A_1$\\
    \bottomrule
  \end{tabular}
  \bigskip
\end{table}

\begin{table}[p]
  \bigskip
  \footnotesize
  \extrarowheight1pt
  \caption{Reflection subgroups of $W(E_8)$.}
  \label{tab:e8}
  \begin{tabular}[t]{crrc}\toprule
    Type of $R$ & $\Size{R}$ & $\Size{[R]}$ & Class 
    \\[4pt] \toprule
    $\emptyset$ & 1 & 1 & $1$\\
    \noalign{\vglue 1pt} \hline
    $A_1$ & 2 & 120 & $A_1$\\
    \noalign{\vglue 1pt} \hline
    $A_1^{2}$ & 4 & 3780 & $2A_1$\\
    \noalign{\vglue 1pt} \hline
    $A_2$ & 6 & 1120 & $A_2$\\
    \noalign{\vglue 1pt} \hline
    $A_1^{3}$ & 8 & 37800 & $3A_1$\\
    \noalign{\vglue 1pt} \hline
    $A_1 A_2$ & 12 & 40320 & $A_2+A_1$\\
    \noalign{\vglue 1pt} \hline
    $A_3$ & 24 & 7560 & $A_3$\\
    \noalign{\vglue 1pt} \hline
    $A_1^{4}$ & 16 & 113400 & $4A_1''$\\
    \noalign{\vglue 1pt} \hline
    $A_1^{2} A_2$ & 24 & 302400 & $A_2+2A_1$\\
    \noalign{\vglue 1pt} \hline
    $A_2^{2}$ & 36 & 67200 & $2A_2$\\
    \noalign{\vglue 1pt} \hline
    $A_1 A_3$ & 48 & 151200 & $A_3+A_1$\\
    \noalign{\vglue 1pt} \hline
    $A_4$ & 120 & 24192 & $A_4$\\
    \noalign{\vglue 1pt} \hline
    $D_4$ & 192 & 3150 & $D_4$\\
    $A_1^{4}$ & 16 & 9450 & $4A_1'$\\
    \noalign{\vglue 1pt} \hline
    $A_1^{3} A_2$ & 48 & 604800 & $A_2+3A_1$\\
    \noalign{\vglue 1pt} \hline
    $A_1 A_2^{2}$ & 72 & 403200 & $2A_2+A_1$\\
    \noalign{\vglue 1pt} \hline
    $A_1^{2} A_3$ & 96 & 453600 & $A_3+2A_1''$\\
    \noalign{\vglue 1pt} \hline
    $A_2 A_3$ & 144 & 302400 & $A_3+A_2$\\
    \noalign{\vglue 1pt} \hline
    $A_1 A_4$ & 240 & 241920 & $A_4+A_1$\\
    \noalign{\vglue 1pt} \hline
    $A_1 D_4$ & 384 & 37800 & $D_4+A_1$\\
    $A_1^{5}$ & 32 & 113400 & $5A_1$\\
    \noalign{\vglue 1pt} \hline
    $A_5$ & 720 & 40320 & $A_5$\\
    \noalign{\vglue 1pt} \hline
    $D_5$ & 1920 & 7560 & $D_5$\\
    $A_1^{2} A_3$ & 96 & 75600 & $A_3+2A_1'$\\
    \noalign{\vglue 1pt} \hline
    $A_1^{2} A_2^{2}$ & 144 & 604800 & $2A_2+2A_1$\\
    \noalign{\vglue 1pt} \hline
    $A_1 A_2 A_3$ & 288 & 604800 & $A_3+A_2+A_1$\\
    \noalign{\vglue 1pt} \hline
    $A_1^{2} A_4$ & 480 & 362880 & $A_4+2A_1$\\
    \noalign{\vglue 1pt} \hline
    $A_3^{2}$ & 576 & 151200 & $2A_3''$\\
    \noalign{\vglue 1pt} \hline
    $A_2 A_4$ & 720 & 241920 & $A_4+A_2$\\
    \noalign{\vglue 1pt} \hline
    $A_2 D_4$ & 1152 & 50400 & $D_4+A_2$\\
    $A_1^{4} A_2$ & 96 & 151200 & $A_2+4A_1$\\
    \noalign{\vglue 1pt} \hline
    $A_1 A_5$ & 1440 & 120960 & $A_5+A_1''$\\
    \noalign{\vglue 1pt} \hline
    $A_1 D_5$ & 3840 & 45360 & $D_5+A_1$\\
    $A_1^{3} A_3$ & 192 & 453600 & $A_3+3A_1$\\
    \noalign{\vglue 1pt} \hline
    $A_6$ & 5040 & 34560 & $A_6$\\
    \noalign{\vglue 1pt} \hline
    $D_6$ & \llap{23040} & 3780 & $D_6$\\
    $A_1^{2} D_4$ & 768 & 56700 & $D_4+2A_1$\\
    $A_3^{2}$ & 576 & 37800 & $2A_3'$\\
    $A_1^{6}$ & 64 & 56700 & $6A_1$\\
    \bottomrule
  \end{tabular}
  \qquad
  \begin{tabular}[t]{crrc}\toprule
    Type of $R$ & $\Size{R}$ & $\Size{[R]}$ & Class 
    \\[4pt] \toprule
    $E_6$ & 51840 & 1120 & $E_6$\\
    $A_1 A_5$ & 1440 & 40320 & $A_5+A_1'$\\
    $A_2^{3}$ & 216 & 44800 & $3A_2$\\
    \noalign{\vglue 1pt} \hline
    $A_1 A_2 A_4$ & 1440 & 241920 & $A_4+A_2+1$\\
    \noalign{\vglue 1pt} \hline
    $A_3 A_4$ & 2880 & 120960 & $A_4+A_3$\\
    \noalign{\vglue 1pt} \hline
    $A_1 A_6$ & 10080 & 34560 & $A_6+A_1$\\
    \noalign{\vglue 1pt} \hline
    $A_2 D_5$ & 11520 & 30240 & $D_5+A_2$\\
    $A_1^{2} A_2 A_3$ & 576 & 302400 & $A_3+A_2+2A_1$\\
    \noalign{\vglue 1pt} \hline
    $A_7$ & 40320 & 8640 & $A_7''$\\
    \noalign{\vglue 1pt} \hline
    $A_1 E_6$ & 103680 & 3360 & $E_6+A_1$\\
    $A_1^{2} A_5$ & 2880 & 120960 & $A_5+2A_1$\\
    $A_1 A_2^{3}$ & 432 & 134400 & $3A_2+A_1$\\
    \noalign{\vglue 1pt} \hline
    $D_7$ & 322560 & 1080 & $D_7$\\
    $A_1^{2} D_5$ & 7680 & 22680 & $D_5+2A_1$\\
    $A_3 D_4$ & 4608 & 37800 & $D_4+A_3$\\
    $A_1^{4} A_3$ & 384 & 113400 & $A_3+4A_1$\\
    \noalign{\vglue 1pt} \hline
    $E_7$ & 2903040 & 120 & $E_7$\\
    $A_1 D_6$ & 46080 & 7560 & $D_6+A_1$\\
    $A_7$ & 40320 & 4320 & $A_7'$\\
    $A_2 A_5$ & 4320 & 40320 & $A_5+A_2$\\
    $A_1 A_3^{2}$ & 1152 & 75600 & $2A_3+A_1$\\
    $A_1^{3} D_4$ & 1536 & 37800 & $D_4+3A_1$\\
    $A_1^{7}$ & 128 & 16200 & $7A_1$\\
    \noalign{\vglue 1pt} \hline
    $E_8$ & \llap{696729600} & 1 & $E_8$\\
    $D_8$ & \llap{5160960} & 135 & $D_8$\\
    $A_8$ & 362880 & 960 & $A_8$\\
    $A_1 A_7$ & 80640 & 4320 & $A_7+A_1$\\
    $A_1 A_2 A_5$ & 8640 & 40320 & $A_5+A_2+A_1$\\
    $A_4^{2}$ & 14400 & 12096 & $2A_4$\\
    $A_3 D_5$ & 46080 & 7560 & $A_7+A_1$\\
    $A_2 E_6$ & 311040 & 1120 & $E_6+A_2$\\
    $A_1 E_7$ & \llap{5806080} & 120 & $E_7+A_1$\\
    $A_1^{2} D_6$ & 92160 & 3780 & $D_6+2A_1$\\
    $D_4^{2}$ & 36864 & 1575 & $2D_4$\\
    $A_1^{2} A_3^{2}$ & 2304 & 37800 & $2A_3+2A_1$\\
    $A_2^{4}$ & 1296 & 11200 & $4A_2$\\
    $A_1^{4} D_4$ & 3072 & 9450 & $D_4+4A_1$\\
    $A_1^{8}$ & 256 & 2025 & $8A_1$\\
    \bottomrule
  \end{tabular}
  \bigskip
\end{table}

\begin{table}[p]
  \bigskip
  \extrarowheight2pt
  \caption{Reflection subgroups of $W(F_4)$.}
  \label{tab:f4}
  \begin{tabular}[t]{crrc}\toprule
    Type of $R$ & $\Size{R}$ & $\Size{[R]}$ & Class 
    \\[5pt] \toprule
    $\emptyset$ & 1 & 1 & $1$\\
    \noalign{\vglue 2pt} \hline
    $A_1$ & 2 & 12 & $A_1$\\
    \noalign{\vglue 2pt} \hline
    $\tilde A_1$ & 2 & 12 & $\tilde A_1$\\
    \noalign{\vglue 2pt} \hline
    $A_1 \tilde A_1$ & 4 & 72 & $A_1+\tilde A_1$\\
    \noalign{\vglue 2pt} \hline
    $A_2$ & 6 & 16 & $A_2$\\
    \noalign{\vglue 2pt} \hline
    $\tilde A_2$ & 6 & 16 & $\tilde A_2$\\
    \noalign{\vglue 2pt} \hline
    $B_2$ & 8 & 18 & $B_2$\\
    $\tilde A_1^{2}$ & 4 & 18 & $2A_1$\\
    $A_1^{2}$ & 4 & 18 & $2A_1$\\
    \noalign{\vglue 2pt} \hline
    $A_2 \tilde A_1$ & 12 & 48 & $A_2+\tilde A_1$\\
    \noalign{\vglue 2pt} \hline
    $A_1 \tilde A_2$ & 12 & 48 & $\tilde A_2+A_1$\\
    \noalign{\vglue 2pt} \hline
    $B_3$ & 48 & 12 & $B_3$\\
    $A_3$ & 24 & 12 & $A_3$\\
    $\tilde A_1 B_2$ & 16 & 36 & $A_3$\\
    $A_1^{2} \tilde A_1$ & 8 & 36 & $2A_1+\tilde A_1$\\
    $\tilde A_1^{3}$ & 8 & 12 & $2A_1+\tilde A_1$\\
    \noalign{\vglue 2pt} \hline
    $C_3$ & 48 & 12 & $C_3$\\
    $\tilde A_3$ & 24 & 12 & $B_2+A_1$\\
    $A_1 B_2$ & 16 & 36 & $B_2+A_1$\\
    $A_1 \tilde A_1^{2}$ & 8 & 36 & $3A_1$\\
    $A_1^{3}$ & 8 & 12 & $3A_1$\\
    \bottomrule
  \end{tabular}
  \qquad
  \begin{tabular}[t]{crrc}\toprule
    Type of $R$ & $\Size{R}$ & $\Size{[R]}$ & Class 
    \\[5pt] \toprule
    $F_4$ & 1152 & 1 & $F_4$\\
    $B_4$ & 384 & 3 & $B_4$\\
    $C_4$ & 384 & 3 & $B_4$\\
    $\tilde D_4$ & 192 & 1 & $C_3+A_1$\\
    $D_4$ & 192 & 1 & $D_4$\\
    $\tilde A_1 B_3$ & 96 & 12 & $D_4$\\
    $A_1 C_3$ & 96 & 12 & $C_3+A_1$\\
    $B_2^{2}$ & 64 & 9 & $D_4(a_1)$\\
    $A_1 \tilde A_3$ & 48 & 12 & $A_3+\tilde A_1$\\
    $A_3 \tilde A_1$ & 48 & 12 & $A_3+\tilde A_1$\\
    $A_2 \tilde A_2$ & 36 & 16 & $A_2+\tilde A_2$\\
    $\tilde A_1^{2} B_2$ & 32 & 18 & $A_3+\tilde A_1$\\
    $A_1^{2} B_2$ & 32 & 18 & $A_3+\tilde A_1$\\
    $A_1^{2} \tilde A_1^{2}$ & 16 & 18 & $4A_1$\\
    $\tilde A_1^{4}$ & 16 & 3 & $4A_1$\\
    $A_1^{4}$ & 16 & 3 & $4A_1$\\
    \bottomrule
  \end{tabular}
  \bigskip
\end{table}

\begin{table}[p]
  \bigskip
  \extrarowheight2pt
  \caption{Reflection subgroups of $W(G_2)$.}
  \label{tab:g2}
  \begin{tabular}{crrc}\toprule
    Type of $R$ & $\Size{R}$ & $\Size{[R]}$ & Class 
    \\[5pt] \toprule
    $\emptyset$ & 1 & 1 & $1$\\
    \noalign{\vglue 1pt} \hline
    $A_1$ & 2 & 3 & $A_1$\\
    \noalign{\vglue 1pt} \hline
    $\tilde A_1$ & 2 & 3 & $\tilde A_1$\\
    \noalign{\vglue 1pt} \hline
    $G_2$ & 12 & 1 & $G_2$\\
    $\tilde A_2$ & 6 & 1 & $A_2$\\
    $A_1 \tilde A_1$ & 4 & 3 & $A_1 + \tilde A_1$\\
    $A_2$ & 6 & 1 & $A_2$\\
    \bottomrule
  \end{tabular}
\end{table}

\begin{table}[p]
  \bigskip
  \extrarowheight2pt
  \caption{Reflection subgroups of $W(H_3)$.}
  \label{tab:h3}
  \begin{tabular}{crrr}\toprule
    Type of $R$ & $\Size{R}$ & $\Size{[R]}$ & Class 
    \\[5pt] \toprule
    $\emptyset$ & 1 & 1 & $1$\\
    \noalign{\vglue 1pt} \hline
    $A_1$ & 2 & 15 & $2$\\
    \noalign{\vglue 1pt} \hline
    $A_1^{2}$ & 4 & 15 & $4$\\
    \noalign{\vglue 1pt} \hline
    $A_2$ & 6 & 10 & $5$\\
    \noalign{\vglue 1pt} \hline
    $I_2(5)$ & 10 & 6 & $3$\\
    \noalign{\vglue 1pt} \hline
    $H_3$ & 120 & 1 & $6$\\
    $A_1^{3}$ & 8 & 5 & $10$\\
    \bottomrule
  \end{tabular}
\end{table}

\begin{table}[p]
  \bigskip
  \extrarowheight2pt
  \caption{Reflection subgroups of $W(H_4)$.}
  \label{tab:h4} 
  \begin{tabular}[t]{crrc}\toprule
    Type of $R$ & $\Size{R}$ & $\Size{[R]}$ & Class 
    \\[5pt] \toprule
    $\emptyset$ & 1 & 1 & $1$\\
    \noalign{\vglue 1pt} \hline
    $A_1$ & 2 & 60 & $2$\\
    \noalign{\vglue 1pt} \hline
    $A_1^{2}$ & 4 & 450 & $4$\\
    \noalign{\vglue 1pt} \hline
    $A_2$ & 6 & 200 & $5$\\
    \noalign{\vglue 1pt} \hline
    $I_2(5)$ & 10 & 72 & $3$\\
    \noalign{\vglue 1pt} \hline
    $A_1 A_2$ & 12 & 600 & $8$\\
    \noalign{\vglue 1pt} \hline
    $I_2(5) A_1$ & 20 & 360 & $7$\\
    \noalign{\vglue 1pt} \hline
    $A_3$ & 24 & 300 & $9$\\
    \bottomrule
  \end{tabular}
  \qquad
  \begin{tabular}[t]{crrc}\toprule
    Type of $R$ & $\Size{R}$ & $\Size{[R]}$ & Class 
    \\[5pt] \toprule
    $H_3$ & 120 & 60 & $6$\\
    $A_1^{3}$ & 8 & 300 & $20$\\
    \noalign{\vglue 1pt} \hline
    $H_4$ & 14400 & 1 & $11$\\
    $H_3 A_1$ & 240 & 60 & $21$\\
    $I_2(5)^{2}$ & 100 & 36 & $26$\\
    $A_4$ & 120 & 60 & $27$\\
    $A_2^{2}$ & 36 & 100 & $32$\\
    $D_4$ & 192 & 25 & $25$\\
    $A_1^{4}$ & 16 & 75 & $34$\\
    \bottomrule
\end{tabular}
\end{table}

\clearpage

\bigskip 
{\bf Acknowledgments}: The authors acknowledge the financial
support of the DFG-priority programme SPP1489 ``Algorithmic and Experimental
Methods in Algebra, Geometry, and Number Theory''.  Part of the research for
this paper was carried out while the authors were staying at the
Mathematical Research Institute Oberwolfach supported by the ``Research in
Pairs'' programme.
The second author wishes to acknowledge support from Science Foundation
Ireland.
We are grateful to Robert Howlett for helpful discussions.
Finally, we thank the referee for helpful comments.


\bibliographystyle{plain}
\bibliography{reflsubs}

\end{document}